\newtheorem{theorem}{Theorem}
\newtheorem{lemma}[theorem]{Lemma}
\newtheorem{proposition}[theorem]{Proposition}
\begin{document}
\baselineskip 18pt
\title{The zero blocking numbers of generalized Kneser graphs and generalized Johnson graphs}
\author{Hau-Yi Lin\,$^{1,a}$, Wu-Hsiung Lin\,$^{1,b,*}$, Gerard Jennhwa Chang\,$^{2,c}$ \\
{\footnotesize $^1$\,Department of Applied Mathematics, National Yang Ming Chiao Tung University,} \\
{\footnotesize Hsinchu 30010, Taiwan\hspace{8.8cm}} \\
{\footnotesize $^2$\,Department of Mathematics, National Taiwan University,
                    Taipei 10617, Taiwan\hspace{0.4cm}} \\
{\footnotesize $^a$\,Email address: zaq1bgt5cde3mju7@gmail.com\hspace{5.55cm}} \\
{\footnotesize $^b$\,Email address: wuhsiunglin@nctu.edu.tw\hspace{6.3cm}} \\
{\footnotesize $^c$\,Email address: gjchang@math.ntu.edu.tw\hspace{6.25cm}} \\
{\footnotesize $^*$\,Corresponding author.\hspace{9.15cm}} \\}

\date{July 18, 2025}
\maketitle

\noindent
{\bf Abstract.}
This paper extends the results by Afzali, Ghodrati and Maimani for the zero blocking numbers of Kneser graphs and Johnson graphs to generalized Kneser graphs and generalized Johnson graphs.

\bigskip
\noindent
{\bf Keywords.}
Zero forcing process, zero blocking set, zero blocking number, (generalized) Kneser graph, (generalized) Johnson graph.

\bigskip
\noindent
{\bf 2020 Mathematics Subject Classification.}
05C69, 05C85, 68R10.

\section{Introduction}

In a {\it zero forcing process}, vertices of a graph are colored {\it black} or {\it white} initially.
If there exists a black vertex adjacent to exactly one white vertex,
then the white vertex is {\it forced} to be black.
A {\it zero forcing set} is an initial set of black vertices in a zero forcing process that ultimately expands to include all vertices of the graph; otherwise we call it a {\it failed zero forcing set} and its complement, the initial set of white vertices, a {\it zero blocking set}.
The {\it zero forcing number} $Z(G)$ of a graph $G$ is the minimum size of a zero forcing set.
The {\it failed zero forcing number} $F(G)$ of $G$ is the maximum size of a failed zero forcing set.
The {\it zero blocking number} $B(G)$ of $G$ is the minimum size of a zero blocking set.
It is clear that $F(G)+B(G)=|V(G)|$ for every graph $G$.
Hence, studying $F(G)$ and $B(G)$ are equivalent.

The zero forcing process was introduced in \cite{2008AIM} to study minimum rank problems in linear algebra, and independently in \cite{2007bg} to explore quantum systems memory transfers in quantum physics.
The computation of the zero forcing number is NP-hard \cite{2008a}.
Considerable effort has been made to find exact values and bounds for this
number for specific classes of graphs, and to investigate a variety of related concepts arising in zero forcing processes.

The concept of failed zero forcing number was first introduced in \cite{2015fjs}.
The computation of the failed zero forcing number is NP-hard \cite{2017s}.
Results for exact values and bounds for this number were also established in
{\cite{2024agm,2020apnaA,2020apnaB,2016ajps,2020bccknt,2024c,2021grtn,
2024grtn,2020ksv,2023ktn,2023su}.
Our main concern is the results for the zero blocking numbers of Kneser graphs $K(n,k)$ and Johnson graphs $J(n,k)$ given by Afzali, Ghodrati and Maimani \cite{2024agm}.
In particular, we extend their results to generalized Kneser graphs and generalized Johnson graphs.

All graphs in this paper are finite, undirected, without loops and multiple edges.
In a graph, the {\it neighborhood} of a vertex $x$ is the set $N(x)$ of all vertices adjacent to $x$ and the {\it closed neighborhood} of $x$ is $N[x]=\{x\}\cup N(x)$.
A vertex $x$ is {\it isolated} if $N(x)=\emptyset$.
Two distinct vertices $x$ and $y$ are {\it twins} if either $N(x)=N(y)$ or $N[x]=N[y]$.

The following are some basic properties for zero blocking numbers.

\begin{proposition} \label{components}
If a graph $G$ has exactly $r$ components $G_1, G_2, \ldots, G_r$, then $B(G)=\min_{1\le i\le r} B(G_i)$.
\end{proposition}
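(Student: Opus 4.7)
The plan is to establish the two inequalities $B(G)\le\min_i B(G_i)$ and $B(G)\ge\min_i B(G_i)$ separately, relying on a single structural observation: the zero forcing process on $G$ decouples across components. Indeed, every force performed during the process involves a black vertex and a white neighbor of it, and all neighbors of any vertex of $G_i$ lie inside $V(G_i)$. Consequently, the process on $G$ initialized from a set $S\subseteq V(G)$ is exactly the union of the $r$ independent processes on the $G_i$'s initialized from $S\cap V(G_i)$.

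For the upper bound, I would fix an index $i$ and take a minimum zero blocking set $W_i$ of $G_i$, so $|W_i|=B(G_i)$. Viewed as a subset of $V(G)$, the process on $G$ starting from $V(G)\setminus W_i$ projects onto $G_i$ as the process on $G_i$ starting from $V(G_i)\setminus W_i$, which by the choice of $W_i$ fails to blacken every vertex of $G_i$. Thus $W_i$ is itself a zero blocking set of $G$, giving $B(G)\le|W_i|=B(G_i)$ for every $i$, and hence $B(G)\le\min_i B(G_i)$.

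For the lower bound, I would take a minimum zero blocking set $W$ of $G$. Since the process from $V(G)\setminus W$ fails to blacken all of $V(G)$, at least one vertex remains white at the end; say it lies in the component $G_j$. The locality observation then shows that $W\cap V(G_j)$ is a zero blocking set of $G_j$, so $B(G_j)\le|W\cap V(G_j)|\le|W|=B(G)$, yielding $\min_i B(G_i)\le B(G)$.

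Because the decoupling of the forcing process across components is essentially immediate from the definitions, there is no real obstacle here; the only point that warrants explicit articulation is the locality fact, since it is what lets us pass freely between a global blocking set of $G$ and its restrictions to individual components. Degenerate situations, such as isolated vertices forming their own components or components $G_i$ where $W\cap V(G_i)=\emptyset$ (on which the local process trivially succeeds), are handled by the same argument without modification.
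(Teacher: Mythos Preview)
Your argument is correct: the decoupling observation is exactly the point, and both inequalities follow from it as you describe. The paper itself states this proposition without proof, treating it as an immediate consequence of the definitions, so your write-up simply supplies the routine details.
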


\begin{proposition}  \label{isolated}
For any graph $G$, we have $B(G)\ge 1$, and $B(G)=1$ if and only if $G$ has an isolated vertex.
\end{proposition}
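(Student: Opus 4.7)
The plan is to unpack the definition of a zero blocking set and verify the two assertions directly; both follow immediately from the forcing rule applied to one or zero white vertices.

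\medskip

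For the lower bound $B(G)\ge 1$, I would observe that the empty set is never a zero blocking set. If $W=\emptyset$, then every vertex of $G$ is already black at the start, so the zero forcing process has trivially colored all of $V(G)$ black; equivalently, $V(G)\setminus W=V(G)$ is (vacuously) a zero forcing set, not a failed one. Hence every zero blocking set has at least one vertex, and $B(G)\ge 1$.

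\medskip

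For the characterization of equality, I would handle the two directions separately. If $G$ contains an isolated vertex $v$, I claim that $W=\{v\}$ is a zero blocking set. Indeed, all vertices other than $v$ are initially black, and since $N(v)=\emptyset$, no black vertex has $v$ as a neighbor at all, so no black vertex is ever adjacent to exactly one white vertex; thus $v$ remains white forever, and $W$ is a zero blocking set of size $1$. Combined with $B(G)\ge 1$ this gives $B(G)=1$. Conversely, assume $B(G)=1$ and let $W=\{v\}$ be a zero blocking set of size $1$. If $v$ had any neighbor $u$, then $u$ would be initially black, and its only white neighbor would be $v$ (since $v$ is the sole white vertex), so the rule would immediately force $v$ to black, contradicting the assumption that the process fails. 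Hence $v$ must be isolated.

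\medskip

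There is no real obstacle here; the statement is a direct consequence of the definitions. The only care needed is to keep the roles of ``black/white'' and ``forcing set/blocking set'' straight, and to treat the degenerate case $W=\emptyset$ correctly when establishing $B(G)\ge 1$.
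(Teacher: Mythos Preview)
Your argument is correct and is exactly the natural one. The paper itself states this proposition without proof, treating it as an immediate consequence of the definitions, so there is nothing further to compare.
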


\begin{proposition} {\bf (\cite{2015fjs})} 
                    \label{twins}
For a graph $G$ without isolated vertices, $B(G)=2$ if and only if $G$ has a pair of twins.
\end{proposition}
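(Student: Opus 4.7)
My plan is to establish both directions by characterizing when a two-element set $\{u,v\}\subseteq V(G)$ is a zero blocking set. Since $G$ has no isolated vertices, Proposition \ref{isolated} gives $B(G)\ge 2$, so $B(G)=2$ is equivalent to the existence of some zero blocking set of size exactly two.

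The key observation I would prove first is that if $\{u,v\}$ is a zero blocking set, then the zero forcing process starting from $V(G)\setminus\{u,v\}$ must already be stalled at step one; that is, no black vertex may have exactly one white neighbor in the initial configuration. The reason is that after any single forcing step, only one white vertex would remain, and since that vertex is non-isolated, it has a black neighbor which would then force it, cascading to color everything black---contradicting the blocking property.

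Translating the stalled-at-step-one condition: every $w\notin\{u,v\}$ has $0$ or $2$ white neighbors in $\{u,v\}$, so $w\in N(u)\Leftrightarrow w\in N(v)$, i.e., $N(u)\setminus\{v\}=N(v)\setminus\{u\}$. When $u\not\sim v$ this yields $N(u)=N(v)$; when $u\sim v$ it yields $N[u]=N[v]$. Either case gives a pair of twins, proving the forward direction. Conversely, if $u,v$ are twins, the same set identity shows that every vertex outside $\{u,v\}$ has $0$ or $2$ white neighbors, so the process is stalled from the start and $\{u,v\}$ is a zero blocking set, giving $B(G)\le 2$.

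I do not anticipate a serious obstacle. The one step requiring care is the stalled-at-step-one reduction, since this is precisely where the no-isolated-vertex hypothesis is used; after that, both directions amount to a direct unwinding of the definition of twins in the two subcases $u\sim v$ and $u\not\sim v$.
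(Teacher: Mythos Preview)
The paper does not supply its own proof of this proposition; it is quoted as a result from \cite{2015fjs} without argument. Your proof is correct and complete: the reduction to ``stalled at step one'' is exactly the right observation (and the place where the no-isolated-vertex hypothesis enters, since a lone remaining white vertex has a black neighbor with a unique white neighbor), and the translation of that stalled condition into $N(u)\setminus\{v\}=N(v)\setminus\{u\}$, split into the cases $u\sim v$ and $u\not\sim v$, recovers precisely the two clauses in the definition of twins.
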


For a positive integer $n$, denote $[n]$ the set $\{1,2,\ldots,n\}$.
For integers $a\leq b$, denote $[a,b]$ the set $\{a,a+1,\ldots,b\}$.
The {\it symmetric difference} of two sets $A$ and $B$ is the set $A\Delta B=(A\cup B)\setminus (A\cap B)$.
For integers $n\ge k>a\geq 0$,
the {\it generalized Kneser graph} is the graph $K(n,k,a)$
whose vertices are the $k$-subsets of $[n]$
and two vertices $A$ and $B$ are adjacent if $|A\cap B|\leq a$,
and the {\it generalized Johnson graph} is the graph $J(n,k,a)$
whose vertices are the $k$-subsets of $[n]$
and $A$ and $B$ are adjacent if $|A\cap B|=a$.
The graphs $K(n,k,0)$ were known as Kneser graphs $K(n,k)$ and $J(n,k,k-1)$ as Johnson graphs $J(n,k)$.
Results for the zero blocking numbers of Kneser graphs and Johnson graphs were established by Afzali, Ghodrati and Maimani \cite{2024agm}.

\begin{theorem} {\bf (\cite{2024agm})}  
                \label{2024agm}
If $k\ge 2$ and $n\ge 2k+1$, then $B(K(n,k))=k+2$.
If $k\ge 1$ and $n\ge 2k$, then $B(J(n,k))=k+1$ except that $B(J(4,2))=2$.
\end{theorem}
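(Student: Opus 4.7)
The theorem has four sub-claims (upper and lower bounds for $B(K(n,k))$ and $B(J(n,k))$), with the $J(4,2)$ exception handled via Proposition~\ref{twins}. My plan is to establish the upper bounds by explicit constructions of small zero blocking sets, and the lower bounds by showing that any strictly smaller family admits a vertex outside it with exactly one neighbor inside.

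For $B(K(n,k))\leq k+2$, I would take
\[
\mathcal{W}_K=\bigl\{\,[k-1]\cup\{k-1+j\}:j\in[k+2]\,\bigr\},
\]
which lies inside $[n]$ precisely when $n\geq 2k+1$. If some $B\notin\mathcal{W}_K$ had exactly one neighbor $[k-1]\cup\{k-1+i\}$ in $\mathcal{W}_K$, then $B$ would be disjoint from $[k-1]$ and from $\{k-1+i\}$ while intersecting every other $[k-1]\cup\{k-1+j\}$; since $B$ avoids $[k-1]$, this forces the $k+1$ distinct elements $k-1+j$ ($j\in[k+2]\setminus\{i\}$) into $B$, contradicting $|B|=k$. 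For $B(J(n,k))\leq k+1$, I would take $\mathcal{W}_J=\binom{[k+1]}{k}$, a family of $k+1$ vertices sitting in $[n]$ since $n\geq 2k\geq k+1$, and verify that for any $B\notin\mathcal{W}_J$ with $t=|B\cap[k+1]|\leq k-1$ a direct count gives $|N(B)\cap\mathcal{W}_J|=0$ when $t\leq k-2$ and $=2$ when $t=k-1$.

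For $B(K(n,k))\geq k+2$, suppose by contradiction $W=\{A_1,\dots,A_s\}$ is zero blocking with $s\leq k+1$. For a chosen index $i$, I pick $x_j\in A_j\setminus A_i$ for each $j\neq i$ (possible since the $A_j$ are distinct $k$-subsets), form $X=\{x_j:j\neq i\}\subseteq[n]\setminus A_i$, and extend $X$ to a $k$-subset $B\subseteq[n]\setminus A_i$. Then $B$ is disjoint from $A_i$ but intersects each $A_j$ ($j\neq i$), so $|N(B)\cap W|=1$; the remaining task is to choose the extension so that $B\notin W$. The slack $n\geq 2k+1$ supplies enough extensions when $|X|\leq k-1$; the only tight case is $s=k+1$ with the $x_j$ forced distinct so $|X|=k$ and $B=X$, which is handled by varying $i$ so that some resulting $X$ does not coincide with a member of $W$.

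For $B(J(n,k))\geq k+1$ outside $(n,k)=(4,2)$, assume $W=\{A_1,\dots,A_s\}$ with $s\leq k$. I would construct a neighbor $B=(A_1\setminus\{u\})\cup\{v\}$ of $A_1$, with $u\in A_1$ and $v\notin A_1$, such that $|B\cap A_j|\neq k-1$ for every $j\geq 2$ and $B\notin W$, by enumerating the $k(n-k)$ admissible pairs $(u,v)$ and subtracting the pairs forbidden by each $A_j$ (a count that depends on $|A_1\cap A_j|$) together with the at most $s-1$ pairs arising from other members of $W$; the arithmetic is positive except when $(n,k)=(4,2)$. The latter case falls out of Proposition~\ref{twins}: $\{1,2\}$ and $\{3,4\}$ are twins in $J(4,2)$, so $B(J(4,2))=2$. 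The step I expect to be the main obstacle is precisely this Johnson lower bound: the tighter adjacency condition $|A\cap B|=k-1$ makes the bad-pair counts depend sensitively on the intersection sizes $|A_1\cap A_j|$, and one must show the net pair count is strictly positive exactly when $(n,k)\neq(4,2)$, which is the crux of the argument.
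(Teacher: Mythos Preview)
This theorem is not proved in the present paper: it is quoted from Afzali, Ghodrati and Maimani \cite{2024agm} and used as a black box (for instance in the $n=2k-a$ case of Lemma~\ref{Lemma 2} and in Proposition~\ref{change k a}). There is therefore no ``paper's own proof'' to compare your proposal against.

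As a standalone sketch your upper-bound constructions are correct and clean. The lower bounds, however, are only outlined and contain genuine gaps. For $B(K(n,k))\ge k+2$, your claim that ``the slack $n\ge 2k+1$ supplies enough extensions when $|X|\le k-1$'' is not justified: at $n=2k+1$ with $|X|=k-1$ there are only two extensions of $X$ inside $[n]\setminus A_i$, and you have not argued why at least one of them avoids $W$. The ``tight'' case $|X|=k$ is likewise only asserted to be handled by varying $i$, without showing that some choice of $i$ produces an $X\notin W$. For $B(J(n,k))\ge k+1$, you correctly identify the crux as a bad-pair count over $(u,v)$ depending on the profile of intersections $|A_1\cap A_j|$, but you give no bound on that count; this is exactly the nontrivial part of the argument, and ``the arithmetic is positive except when $(n,k)=(4,2)$'' is a statement to be proved, not an observation. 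In short, the architecture is reasonable and matches the style of arguments used elsewhere in the paper, but the two lower bounds need the missing estimates before the proof is complete.
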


Notice that $B(K(n,k))=1$ or $2$ when $k=1$ or $k\le n\le 2k$, since $K(n,1)=K_n$, $K(2k,k)=\frac{1}{2}{2k\choose k}K_2$ and $K(n,k)={n\choose k}K_1$ for $k\le n<2k$.
Also, $B(J(n,k))=1$ or $2$ when $k=1$ or $k=n$, since $J(n,1)=K_n$ and $J(k,k)=K_1$.
The following gives $J(n,k)$ when $k+1\le n<2k$.

\begin{proposition} \label{change k a}
If $n\ge 2k-a$, then $B(J(n,k,a))=B(J(n,n-k,n-2k+a))$.
Consequently, if $k+1\le n<2k$, then $B(J(n,k))=n-k+1$.
\end{proposition}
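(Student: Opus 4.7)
The plan is to exhibit an explicit graph isomorphism between $J(n,k,a)$ and $J(n,n-k,n-2k+a)$ via set complementation, and then to deduce the consequence about $J(n,k)$ by applying Theorem \ref{2024agm}.

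First I would define $\varphi\colon V(J(n,k,a))\to V(J(n,n-k,n-2k+a))$ by $\varphi(A)=[n]\setminus A$. Since $|A|=k$, $\varphi(A)$ is an $(n-k)$-subset of $[n]$, and $\varphi$ is clearly a bijection onto its target vertex set. The key identity is the elementary fact
$$|\varphi(A)\cap\varphi(B)|=n-|A\cup B|=n-(|A|+|B|-|A\cap B|)=n-2k+|A\cap B|,$$
so $|A\cap B|=a$ if and only if $|\varphi(A)\cap\varphi(B)|=n-2k+a$. Before concluding, I would verify that the target parameters satisfy the standing restrictions $n-k>n-2k+a\ge 0$: the left inequality follows from $k>a$, and the right inequality is exactly the hypothesis $n\ge 2k-a$. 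Hence $\varphi$ is a well-defined graph isomorphism, and so $B(J(n,k,a))=B(J(n,n-k,n-2k+a))$.

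For the consequence, specialize to $a=k-1$, so $J(n,k)=J(n,k,k-1)$. Then $n-2k+a=n-k-1$, and the first part gives
$$B(J(n,k))=B(J(n,n-k,n-k-1))=B(J(n,n-k)).$$
The hypothesis $k+1\le n<2k$ yields $1\le n-k<k$ and $n\ge 2(n-k)$, which are precisely the conditions under which Theorem \ref{2024agm} gives $B(J(n,n-k))=(n-k)+1$. I would also note that the exceptional case $J(4,2)$ would require $n=4$ and $n-k=2$, hence $n=2k$, contradicting $n<2k$, so the exception does not interfere.

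The main obstacle is essentially bookkeeping: checking that the complement map lands in a legitimate generalized Johnson graph (requiring $n\ge 2k-a$) and verifying that the parameter ranges for the consequence avoid the $J(4,2)$ exception. The graph-theoretic content is just the standard fact that intersecting $k$-sets corresponds, under complementation, to intersecting $(n-k)$-sets with a shifted intersection size.
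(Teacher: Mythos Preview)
Your proof is correct and follows the same approach as the paper: complementation $A\mapsto[n]\setminus A$ gives the isomorphism for the first part, and the second part is obtained by applying the first with $a=k-1$ and invoking Theorem~\ref{2024agm} for $J(n,n-k)$. You have simply spelled out more of the bookkeeping (the intersection-size identity, the parameter checks, and the exclusion of the $J(4,2)$ exception) that the paper leaves implicit.
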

\begin{proof}
The first part follows from the fact that $J(n,k,a)$ is isomorphic to  $J(n,n-k,n-2k+a)$
under the mapping $A\mapsto [n]\setminus A$.

For the second part, since $n-k\ge 1$ and $n>2(n-k)$, by Theorem \ref{2024agm}, $B(J(n,k))=B(J(n,n-k))=n-k+1$.
\end{proof}

This paper extends the results by Afzali, Ghodrati and Maimani for the zero blocking numbers of Kneser graphs and Johnson graphs to generalized Kneser graphs and generalized Johnson graphs.
In particular, it is proved that
$B(K(n,k,a))=\lceil\frac{k-a}{a+1}\rceil+2$ (when $a\le k-2$) and $B(J(n,k,a))=\max\{a,k-a\}+2$ when $n$ is relatively larger than $k$ and $a$.

\section{Results for generalized Kneser graphs $K(n,k,a)$}

First, all $K(n,k,a)$ with $B(K(n,k,a))$ no more than two are characterized.

\begin{proposition} \label{small}
Suppose integers $n\geq k>a\geq 0$.
\begin{enumerate}
\itemsep -3pt
\item[$(1)$]
If $n\le 2k-a-1$, then $B(K(n,k,a))=1$.

\item[$(2)$]
If $a=k-1$, then $B(K(n,k,a))=\min\{n-k+1,2\}$.

\item[$(3)$]
If $n=2k$ and $a=0$, then $B(K(n,k,a))=2$.

\item[$(4)$]
If $n\ge 2k-a$ and $k-2\ge a$ but $(n,a)\ne (2k,0)$, then $B(K(n,k,a))\ge 3$.
\end{enumerate}
\end{proposition}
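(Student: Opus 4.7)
The plan is to prove the four parts separately. Parts (1)--(3) are structural observations about the shape of $K(n,k,a)$, while part (4) is the substantive piece that will require case analysis and is handled via Propositions \ref{isolated} and \ref{twins}.

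For part (1), if $n\le 2k-a-1$, then any two $k$-subsets $A,B\subseteq[n]$ satisfy $|A\cap B|\ge |A|+|B|-n\ge a+1$, so no two vertices are adjacent; every vertex is isolated, and Proposition \ref{isolated} gives $B=1$. For part (2), if $a=k-1$, two $k$-subsets are adjacent iff they are distinct, so $K(n,k,k-1)$ is the complete graph on $\binom{n}{k}$ vertices: a single vertex (hence $B=1=n-k+1$) when $n=k$, and a complete graph of order at least two (hence $B=2$ via twins, by Proposition \ref{twins}) otherwise. For part (3), in $K(2k,k,0)$ each $k$-subset is adjacent only to its complement, so the graph is a disjoint union of copies of $K_2$, and Propositions \ref{components} and \ref{twins} give $B=2$.

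Part (4) is the main task. By Propositions \ref{isolated} and \ref{twins}, it suffices to verify that $K(n,k,a)$ has no isolated vertex and no pair of twins under the hypotheses $n\ge 2k-a$, $a\le k-2$, $(n,a)\ne(2k,0)$. Non-isolation is immediate: for any $k$-subset $A$, the condition $n-k\ge k-a$ lets me form $B$ from $a$ elements of $A$ and $k-a$ elements of $[n]\setminus A$, yielding $|A\cap B|=a$. For the absence of twins, fix distinct $A,B$ with $s=|A\cap B|$ and partition $[n]=A_0\sqcup X\sqcup Y\sqcup Z$ with $A_0=A\cap B$, $X=A\setminus B$, $Y=B\setminus A$, $Z=[n]\setminus(A\cup B)$. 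The plan is to construct $C\ne A,B$ with $|A\cap C|\le a<|B\cap C|$ (or the reverse), splitting on whether $s>a$ (non-adjacent) or $s\le a$ (adjacent).

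When $s>a$, note $|A_0|\ge a+1$ and $|Z|\ge s-a\ge 1$; setting $\gamma_0=\max\{1,\,3k-a-s-n\}$, I take $C$ with $a$ elements from $A_0$, $\gamma_0$ elements from $Y$, and $k-a-\gamma_0$ elements from $Z$, and the bound $n\ge 2k-a$ ensures all four capacity constraints while producing $|A\cap C|=a$ and $|B\cap C|=a+\gamma_0\ge a+1$. When $s\le a$ and $|Z|\ge 1$, the simple exchange $C=(B\setminus\{y\})\cup\{z\}$ with $y\in Y$, $z\in Z$ gives $|A\cap C|=s\le a$ and $|B\cap C|=k-1>a$, using $a\le k-2$. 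The remaining adjacent case $|Z|=0$ (so $n=2k-s$ and $[n]=A\cup B$) splits by $s$ and uses internal exchanges: $C=(B\setminus\{y\})\cup\{x\}$ with $x\in X$, $y\in Y$ when $s\le a-1$ (giving $|A\cap C|=s+1\le a$); the mirror $C=(A\setminus\{x\})\cup\{y\}$ with $x\in A_0$, $y\in Y$ when $s=a\ge 1$ (giving $|B\cap C|=a$ and $|A\cap C|=k-1>a$); and $C=(B\setminus\{y\})\cup\{x\}$ with $x\in A$, $y\in B$ when $s=0$, in which case the exclusion $(n,a)\ne(2k,0)$ forces $a\ge 1$ so that $|A\cap C|=1\le a$. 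The main obstacle is the degenerate subcase $|Z|=0$, where the construction cannot escape $A\cup B$ and the gap between $|A\cap C|$ and $|B\cap C|$ must be created by a single internal swap; the excluded parameter $(n,a)=(2k,0)$ is exactly the obstruction, since then $s=a=0$ and the only $k$-subset disjoint from $A$ is $B$ itself, so $A$ and $B$ really are twins, consistent with part (3).
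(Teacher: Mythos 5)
Your proposal is correct and follows essentially the same route as the paper: parts (1)--(3) by identifying the graph as a disjoint union of $K_1$'s, a complete graph, or a perfect matching, and part (4) by combining Propositions \ref{isolated} and \ref{twins} with an explicit third vertex $C$, built by small exchanges, that is adjacent to exactly one of a given pair $A,B$. The only difference is cosmetic: you organize the cases by adjacency of $A,B$ and emptiness of $Z=[n]\setminus(A\cup B)$ rather than by the three ranges of $|A\cap B|$ used in the paper, and in one subcase you distinguish the neighborhoods in the mirror direction, but the constructions and the role of the excluded parameters $(n,a)=(2k,0)$ coincide.
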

\begin{proof}
(1) This follows from $K(n,k,a)={n\choose k}K_1$.

(2) This follows from $K(n,k,k-1)= K_{n\choose k}$.

(3) This follows from $K(2k,k,0)= \frac{1}{2}{2k\choose k}K_2$.

(4) Since $n\ge 2k-a$, every vertex $A$ is adjacent to some vertex $B$ with $|A\cap B|=a$. Hence $K(n,k,a)$ has no isolated vertices.

Next to see that $K(n,k,a)$ has no twins.
Consider two distinct vertices $A$ and $B$.
Choose $x\in A\setminus B$ and $y\in B\setminus A$.
\begin{enumerate}
\itemsep -1pt
\item[(i)] Suppose $A\cap B=\emptyset$.
If $n>2k$, then there exists $z\in [n]\setminus(A\cup B)$.
In this case, $C=B\Delta\{y,z\}\notin\{A,B\}$ is adjacent to $A$ but not to $B$, since $|C\cap A|=0\leq a$ and $|C\cap B|=k-1>a$.
Otherwise, $n=2k$ and so $a\ge 1$, since $(n,a)\neq(2k,0)$.
Then $C=B\Delta\{x,y\}\notin\{A,B\}$ is adjacent to $A$ but not to $B$, since $|C\cap A|=1\leq a$ and $|C\cap B|=k-1>a$.

\item[(ii)] Suppose $0<|A\cap B|\leq a$.
Since $A\cap B\neq \emptyset$, there exists $w\in A\cap B$.
Then $C=B\Delta\{x,w\}\notin\{A,B\}$ is adjacent to $A$ but not to $B$, since $|C\cap A|=|(A\cap B)\Delta\{x,w\}|=|A\cap B|\leq a$ and $|C\cap B|=k-1>a$.

\item[(iii)] Suppose $a< |A\cap B|\leq k-1$.
Choose $C'\subsetneqq A\cap B$ with $|C'|=a$,
and $C''\subseteq [n]\setminus A$ with $B\setminus A\subsetneqq C''$ and $|C''|=k-a$
by the fact that $|B\setminus A|<k-a$.
Then $C=C'\cup C''\notin\{A,B\}$ is adjacent to $A$ but not to $B$, since $|C\cap A|=|C'|=a$ and $|C\cap B|=|C'|+|B\setminus A| \geq a+|\{y\}|=a+1$.
\end{enumerate}

In summary, $K(n,k,a)$ has no isolated vertices and has no twins.
These give that $B(K(n,k,a))\ge 3$ by Propositions \ref{isolated} and \ref{twins}.
\end{proof}

Next we consider $B(K(n,k,a))$ for $n\ge 2k-a$ and $k-2\ge a$ but $(n,a)\ne (2k,0)$.
We are only able to determine $B(K(n,k,a))$ for $n$ relatively larger than $k$ and $a$.

\begin{lemma} \label{Lemma 1}
If $m$ is a positive integer such that
$n\geq mk$ and $k+1\le (a+1)(m-1)$,
then $B(K(n,k,a))\le m$.
\end{lemma}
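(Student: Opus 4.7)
The plan is to exhibit an explicit stalled set $W$ of $m$ vertices, since any nonempty vertex set $W$ in which every outside vertex has either $0$ or at least $2$ neighbors in $W$ is automatically a zero blocking set (no forcing step can ever occur, so the process terminates immediately with $W$ still white). The two hypotheses should be used in a clean, separated way: $n\ge mk$ provides enough room to take $m$ pairwise disjoint $k$-subsets of $[n]$, while $(a+1)(m-1)\ge k+1$ drives a pigeonhole step ruling out the situation of exactly one neighbor in $W$.

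Concretely, I would set $A_i=[(i-1)k+1,\,ik]$ for $i=1,\ldots,m$ and take $W=\{A_1,\ldots,A_m\}$. For any $k$-subset $B\notin W$, put $d_i=|B\cap A_i|$; then $B$ is adjacent to $A_i$ in $K(n,k,a)$ exactly when $d_i\le a$. Since the $A_i$ are pairwise disjoint, $\sum_{i=1}^m d_i\le|B|=k$. If $B$ had exactly one neighbor in $W$, say $A_1$, then $d_j\ge a+1$ for every $j\ge 2$, which forces $\sum_{j=2}^m d_j\ge (m-1)(a+1)\ge k+1$, contradicting $\sum_{i=1}^m d_i\le k$. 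Therefore every vertex outside $W$ has $0$ or at least $2$ neighbors in $W$, so $W$ is a zero blocking set and $B(K(n,k,a))\le m$.

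I do not foresee any real obstacle: the construction is essentially forced by the shape of the two hypotheses, and the verification is a single pigeonhole inequality. The only points requiring a little care are to remember that ``neighbor of $B$ in $W$'' here means an $A_i\in W$ with $|A_i\cap B|\le a$ (not ``$=a$'', which would be the generalized Johnson case), and to note that $W$ is genuinely nonempty since $(a+1)(m-1)\ge k+1\ge 1$ forces $m\ge 2$.
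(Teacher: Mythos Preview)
Your proof is correct and follows essentially the same approach as the paper: take $m$ pairwise disjoint $k$-subsets of $[n]$ (using $n\ge mk$) as the white set, then use the pigeonhole inequality $\sum_i |S\cap A_i|\le k<(a+1)(m-1)$ to rule out any black vertex having exactly one white neighbor. The only differences are cosmetic---you make the disjoint sets explicit as consecutive intervals and add the remark that $m\ge 2$---but the argument is the same.
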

\begin{proof}
Let ${\cal W}=\{W_1,W_2,\ldots,W_m\}$ be a set of $m$ disjoint $k$-subsets of $[n]$.
Consider ${\cal W}$ as the set of white vertices.
If there exists a black vertex $S$
adjacent to exactly one vertex in ${\cal W}$, say $|S\cap W_1|\leq a$ and
$|S\cap W_i|\geq a+1$ for $2\leq i\leq m$.
Then $k=|S|\geq (a+1)(m-1)$, a contradiction.
Hence, ${\cal W}$ is a zero blocking set, which gives $B(K(n,k,a))\le m$.
\end{proof}

\begin{lemma} \label{Lemma 2}
If $n\ge 2k-a$ and $k-2\ge a$ but $(n,a)\ne (2k,0)$,
then $B(K(n,k,a))\geq \lceil\frac{k-a}{a+1}\rceil+2$.
\end{lemma}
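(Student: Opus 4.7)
Let $m=\lceil(k-a)/(a+1)\rceil$; the ceiling definition yields $(a+1)(m-1)<k-a$, hence $(a+1)m\le k$. Every nonempty stalled white set (one in which no black vertex has exactly one white neighbor) is a zero blocking set, and conversely the minimum zero blocking set must itself be stalled: otherwise some black vertex forces a white vertex, and iterating produces a strictly smaller stalled--hence zero blocking--subset. Thus $B(K(n,k,a))$ equals the minimum size of a nonempty stalled set. I will therefore derive a contradiction from the assumption that a stalled set $W=\{W_1,\ldots,W_t\}$ exists with $3\le t\le m+1$; the bound $t\ge 3$ comes from Proposition \ref{small}(4) under the standing hypotheses.

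The goal is to produce a $k$-subset $S\notin W$ with exactly one white neighbor. Fix $W_1\in W$ as the intended unique neighbor and aim for
\[
|S\cap W_1|\le a\quad\text{and}\quad |S\cap W_i|\ge a+1\ \text{for all}\ i\ge 2.
\]
The hypothesis $n\ge 2k-a$ gives $|[n]\setminus W_1|=n-k\ge k-a$, so I can decompose $S=R\cup T$ with $R\subseteq W_1$, $|R|=a$, and $T\subseteq[n]\setminus W_1$, $|T|=k-a$; this automatically yields $|S\cap W_1|=a$ and, for $i\ge 2$,
\[
|S\cap W_i|=|R\cap(W_1\cap W_i)|+|T\cap(W_i\setminus W_1)|.
\]
The engine of feasibility is the counting inequality $(a+1)(t-1)\le (a+1)m\le k$, which says that the total intersection demand of $a+1$ per vertex $W_i\ (i\ge 2)$ does not exceed the budget $|S|=k$.

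For the construction itself, for each $i\ge 2$ I plan to place $\min(a+1,|W_i\setminus W_1|)$ elements of $W_i\setminus W_1$ into $T$; whenever $|W_i\setminus W_1|<a+1$, the deficit $a+1-|W_i\setminus W_1|$ will be covered by including suitable elements of $W_1\cap W_i$ in $R$. Overlaps $W_i\cap W_j$ allow shared elements to satisfy several demands at once. The unused capacity in $T$ and $R$ is then padded from $[n]\setminus W_1$ and $W_1$, respectively. Using the freedom in this padding, together with the room supplied by $n\ge 2k-a$ and $k-2\ge a$, I ensure $S\neq W_j$ for every $j$.

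The main obstacle is verifying that the selection of $R$ and $T$ can be carried out for an arbitrary overlap structure of $\{W_i\}$. The hardest configurations are those where several $W_i$'s have large intersection with $W_1$ (straining the $R$-budget of size $a$) while other $W_j\setminus W_1$'s are pairwise nearly disjoint (straining the $T$-budget of size $k-a$); in this regime the two budgets compete for the single tight inequality $(a+1)m\le k$. I expect to resolve this by a Hall-type marriage argument with multiplicities, or equivalently by a case split keyed on $\max_{i\ge 2}|W_i\cap W_1|$. An additional subtlety arises at the boundary $n=2k-a$, where $T$ is forced to equal $[n]\setminus W_1$ and only $R$ varies, likely requiring its own short sub-argument to secure $S\notin W$.
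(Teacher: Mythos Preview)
Your overall strategy matches the paper's---assume a minimum (hence stalled) white set $\{W_1,\ldots,W_t\}$ with $3\le t\le m+1$ and build a vertex adjacent only to $W_1$---but the three places you flag as ``to be resolved'' are precisely where the work lies, and your proposed patches are not what makes the argument go through. For the $R$-budget, the mechanism is not a Hall-type matching. The key observation is that when several $d_i:=|W_i\setminus W_1|$ are small, those $W_i$ must overlap heavily inside $W_1$: with $X_d:=\bigcap_{d_i\le d}(W_1\cap W_i)$ one checks $|X_d|\ge k-d(t-1)\ge(a+1-d)(t-1)\ge a+1-d$, and a nested chain $X'_d\subseteq X_d$ of sizes $\max\{0,a+1-d\}$ then lets the single set $X'_{d_0}\subseteq W_1$ of size at most $a$ cover every deficit simultaneously. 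Your plan does not supply this, and a generic marriage argument does not obviously produce a common subset of size $\le a$.

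For the step $S\notin W$, padding freedom alone need not suffice, and the paper does not attempt it. Instead it \emph{allows} the constructed vertex $Z'$ to be white, say $Z'=W_2$; running the same construction with $W_2$ in the role of $W_1$ then forces $W_2$ to be the unique white neighbor of $W_1$. One now rebuilds from the smaller core $\bigcup_{i\ge3}(Y_i\cup X'_{d_i})$, which has size at most $(a+1)(t-2)<k$ and sits inside $W_2$; any $k$-extension $Z''$ with $|Z''\cap W_1|\le a$ is adjacent only to $W_1$ and hence must equal $W_2$, yet since $n>2k-a$ there are at least two such extensions---the contradiction. This two-stage device is the crux and is absent from your plan. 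Finally, the boundary $n=2k-a$ is not handled by ``varying only $R$'': the paper disposes of it by a completely different route, observing $K(2k-a,k,a)=J(2k-a,k,a)\cong K(2k-a,k-a)$ and invoking Theorem~\ref{2024agm} to get $B=k-a+2\ge\lceil(k-a)/(a+1)\rceil+2$ directly.
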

\begin{proof}
Suppose $n=2k-a$.
For every two vertices $A$ and $B$, we have $|A\cap B|=|A|+|B|-|A\cup B|\ge k+k-n=a$ and so $K(2k-a,k,a)=J(2k-a,k,a)$.
By Proposition \ref{change k a} and Theorem \ref{2024agm},
$B(K(2k-a,k,a))=B(J(2k-a,k,a))=B(J(2k-a,k-a,0))=B(K(2k-a,k-a))=k-a+2
\geq \lceil\frac{k-a}{a+1}\rceil+2$.

Now $n>2k-a$.
Suppose to the contrary that $K(n,k,a)$ has
a minimum zero blocking set $\{W_1,W_2,\ldots,W_m\}$ with
$m\le \lceil\frac{k-a}{a+1}\rceil+1$, that is, $k\ge (a+1)(m-1)$.
By Proposition \ref{small} (4), $m\ge 3$.
For $i\ge 2$, let $d_i=|W_i\setminus W_1|$ and $d_0=\min_{i\ge 2} d_i>0$,
and choose $Y_i\subseteq W_i\setminus W_1$ with $|Y_i|=\min\{a+1,d_i\}$.
Also, for $d_0\le d\le k$, let $X_d=\bigcap_{d_i\le d} (W_i\cap W_1)$.
Then $X_k\subseteq X_{k-1}\subseteq\cdots\subseteq X_{d_0}$
and
\begin{eqnarray*}
  |X_d| &=&    |W_1\setminus \bigcup\nolimits_{d_1\le d} (W_1\setminus W_i)| \\
        &\geq& |W_1|-\sum\nolimits_{d_i\le d} |W_1\setminus W_i|
         =     |W_{1}|-\sum\nolimits_{d_i\le d} |W_i\setminus W_1| \\
        &\geq&  k-d(m-1)
         \geq  (a+1)(m-1)-d(m-1) \\
        &=&    (a+1-d)(m-1)
        \geq   a+1-d.
\end{eqnarray*}
Hence, for each $d_0\leq d\leq k$, we may
choose $X'_d\subseteq X_d$ with $|X'_d|=\max\{0,a+1-d\}$
such that $X_k'\subseteq X_{k-1}'\subseteq\cdots\subseteq X_{d_0}'$.
Let $Z=(\bigcup_{i=2}^{m} Y_{i})\cup X'_{d_{0}}$.
Then
$$
   |Z\cap W_1|=|X'_{d_0}|=\max\{0,a+1-d_0\}\le a.
$$
For $i\ge 2$, $|Y_i \cup X'_{d_i}|=
   \min\{a+1,d_i\}+\max\{0,a+1-d_i\}=a+1$ and so
$$
   |Z\cap W_i|\geq |Y_i \cup X'_{d_i}|=a+1.
$$
Also, $|Z|=|\bigcup_{i=2}^m (Y_i \cup X'_{d_i})|
          \le \sum_{i=2}^m |Y_i \cup X'_{d_i}|
          =   (a+1)(m-1)\le k$.

Since $n>2k-a$,
we can extend $Z$ to a $k$-subset $Z' \subseteq [n]$ with $|Z'\cap W_1| \le a$.
Also, $|Z'\cap W_i|\ge |Z\cap W_i|\ge a+1$ for $i\ge 2$.
Since $Z'$ is adjacent to exactly one white vertex $W_1$,
$Z'$ must be white, say $Z'=W_2$.
Similarly, $W_2$ has a white neighbor, which must be $W_1$,
not adjacent to other white vertices.

Notice that $W_2=Z'\supseteq Z=(\bigcup_{i=2}^m Y_i)\cup X'_{d_0}$.
If we extend $\bigcup_{i=3}^m (Y_i\cup X'_{d_i})$ to a $k$-subset $Z''$
with $|Z''\cap W_1|\le a$, then $|Z''\cap W_2|\ge |Y_3\cup X'_{d_3}|=a+1$ and
$|Z''\cap W_i|\ge |Y_i\cup X'_{d_i}|=a+1$ for $i\ge 3$.
Hence $Z''$ is adjacent to exactly one white vertex $W_1$ and so $Z''=W_2$.
But $|\bigcup_{i=3}^m (Y_i\cup X'_{d_i})|
\le \sum_{i=3}^m |Y_i\cup X'_{d_i}|
\le (a+1)(m-2)\le k-(a+1)<k$.
Since $n>2k-a$, there is more than one way to extend $\bigcup_{i=3}^m (Y_i\cup X'_{d_i})$ to $Z''$, a contradiction.

Hence, $B(K(n,k,a))\geq \lceil\frac{k-a}{a+1}\rceil+2$ as desired.
\end{proof}

\begin{theorem} \label{exact value}
If $n\geq k(\lceil\frac{k-a}{a+1}\rceil+2)$ and $k-2\ge a$,
then $B(K(n,k,a))=\lceil\frac{k-a}{a+1}\rceil+2$.
\end{theorem}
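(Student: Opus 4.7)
The plan is to set $m = \lceil\frac{k-a}{a+1}\rceil + 2$ and obtain the theorem by squeezing $B(K(n,k,a))$ between the matching upper and lower bounds coming from Lemmas \ref{Lemma 1} and \ref{Lemma 2}. So the task reduces to verifying that the hypotheses of both lemmas are satisfied under the assumption $n \ge k(\lceil\frac{k-a}{a+1}\rceil+2)$ and $k-2 \ge a$.

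For the upper bound, I will apply Lemma \ref{Lemma 1} with this $m$. The condition $n \ge mk$ is precisely the hypothesis of the theorem. For the other condition, I compute
\[
(a+1)(m-1) \;=\; (a+1)\bigl(\lceil\tfrac{k-a}{a+1}\rceil+1\bigr)
\;\ge\; (a+1)\cdot\tfrac{k-a}{a+1}+(a+1) \;=\; k+1,
\]
which is what Lemma \ref{Lemma 1} requires. Hence $B(K(n,k,a)) \le m = \lceil\frac{k-a}{a+1}\rceil+2$.

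For the lower bound, I will apply Lemma \ref{Lemma 2}, for which I need $n\ge 2k-a$, $k-2\ge a$, and $(n,a)\ne(2k,0)$. The second is assumed directly. Since $\lceil\frac{k-a}{a+1}\rceil \ge 0$, the theorem's hypothesis gives $n \ge 2k \ge 2k-a$. For the final condition, if $a=0$ then $k-2\ge a$ forces $k\ge 2$, and the hypothesis then gives $n \ge k(k+2) \ge 2k+4 > 2k$, ruling out $(n,a)=(2k,0)$. Thus Lemma \ref{Lemma 2} yields $B(K(n,k,a)) \ge \lceil\frac{k-a}{a+1}\rceil+2$, and combining the two bounds completes the proof.

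Since both lemmas have been established, there is no genuine obstacle here; the argument is essentially an arithmetic verification. The only minor care needed is the ceiling inequality in the first display, and the case analysis that rules out the degenerate case $(n,a)=(2k,0)$ when applying Lemma \ref{Lemma 2}.
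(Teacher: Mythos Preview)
Your proposal is correct and follows exactly the paper's approach: apply Lemma~\ref{Lemma 1} with $m=\lceil\frac{k-a}{a+1}\rceil+2$ for the upper bound and Lemma~\ref{Lemma 2} for the lower bound, verifying the hypotheses arithmetically. The paper's own proof is terser (it just notes $n\ge 3k$, using $\lceil\frac{k-a}{a+1}\rceil\ge 1$ from $k-a\ge 2$), but your more explicit checks are fine.
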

\begin{proof}
The theorem follows from  Lemma \ref{Lemma 1} by taking $m=\lceil\frac{k-a}{a+1}\rceil+2$ and Lemma \ref{Lemma 2} by
noting that $n\ge k(\lceil\frac{k-a}{a+1}\rceil+2)\ge 3k$.
\end{proof}

Now the exact value of $B(K(n,k,a))$ is determined for sufficient large $n$, but
not for small $n$.
Since $B(K(n,k,0))=k+2$ for $n\ge 2k+1$ and $k\ge 2$,
the bound of $n$ in Theorem \ref{exact value} is not sharp.
The following is a weaker upper bound of $B(K(n,k,a))$,
but true for arbitrary $n$.

\begin{theorem} \label{weak bound}
$B(K(n,k,a))\leq k-a+2$.
\end{theorem}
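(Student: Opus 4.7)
My plan is to split on $n$ and, in the main range, exhibit an explicit zero blocking set of size $m=k-a+2$ built from a single common ``core'' plus varying single elements.

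For $n\le 2k-a-1$, Proposition \ref{small}(1) immediately gives $B(K(n,k,a))=1$. For the boundary $n=2k-a$, every pair of $k$-subsets of $[n]$ already meets in at least $2k-n=a$ elements, so $K(n,k,a)=J(n,k,a)$, and the complementation $A\mapsto[n]\setminus A$ yields $K(2k-a,k,a)\cong K(2k-a,k-a)$; Theorem \ref{2024agm} (or Proposition \ref{small}(3) in the sporadic $a=0$ case) then bounds $B$ by $k-a+2$ directly.

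The main case is $n\ge 2k-a+1$. Here I would fix any $(k-1)$-subset $E\subseteq[n]$ and pick $m=k-a+2$ distinct elements $y_1,\ldots,y_m\in[n]\setminus E$ (possible because $n-(k-1)\ge k-a+2$), and take ${\cal W}=\{W_1,\ldots,W_m\}$ with $W_i=E\cup\{y_i\}$. To verify that ${\cal W}$ is a zero blocking set, I would fix an arbitrary $k$-subset $S\notin{\cal W}$ and split on $t=|S\cap E|$. If $t\le a-1$, then $|S\cap W_i|\le t+1\le a$ for every $i$, so $S$ is adjacent to all $m$ white vertices. If $t\ge a+1$, then $|S\cap W_i|\ge t>a$ for every $i$, so $S$ is adjacent to none. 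If $t=a$, then $S$ is adjacent to $W_i$ precisely when $y_i\notin S$, and since $\{y_1,\ldots,y_m\}\subseteq[n]\setminus E$ we have $|S\cap\{y_1,\ldots,y_m\}|\le|S\setminus E|=k-a$, forcing at least $m-(k-a)=2$ of the $y_i$ to miss $S$. In every sub-case $S$ is adjacent to zero or to at least two white vertices, so ${\cal W}$ is indeed a zero blocking set.

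The delicate step is the $t=a$ case: the counting ``$\le k-a$ of the $y_i$ can be in $S$'' is exactly what demands $m\ge k-a+2$, and the same inequality forces $n-(k-1)\ge m$, i.e.\ $n\ge 2k-a+1$, which is precisely why the construction cannot be stretched down to $n=2k-a$ and why the isomorphism detour is needed for that single value of $n$. A small bookkeeping check in this sub-case is that $S\ne W_i$: when $a\le k-2$ this is automatic from $|W_i\cap E|=k-1>a=t$, and when $a=k-1$ the sets $S\supsetneq E$ with $S\notin{\cal W}$ contain none of the $y_i$ and so are adjacent to all $m$ white vertices.
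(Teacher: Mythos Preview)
Your argument is correct, but the construction differs from the paper's. The paper takes $W_i=[k+1]\setminus\{i\}$ for $1\le i\le k-a+2$, i.e.\ all $m$ white vertices live inside a common $(k{+}1)$-element set; this works already for $n\ge k+1$ and $a\ge 1$, so the only residual cases are $a=0$ and $n=k$, dismissed in one line. Your sets $W_i=E\cup\{y_i\}$ instead share a common $(k{-}1)$-core and spread the distinguishing elements out, so their union has size $2k-a+1$ rather than $k+1$. That is what forces your isomorphism detour at $n=2k-a$; in exchange, your construction handles $a=0$ uniformly (the paper's does not, since then $m=k+2>k+1$). The two families are in fact complementary: under $A\mapsto[n]\setminus A$ the paper's $W_i$ become exactly your ``core plus singleton'' shape.

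One small gap to patch: in your $n=2k-a$ case with $a=k-1$ you land on $K(k+1,1)$, but Theorem~\ref{2024agm} requires $k\ge 2$ and so does not apply. Of course $K(k+1,1)=K_{k+1}$ has $B=2\le 3=k-a+2$, so a one-line appeal to Proposition~\ref{small}(2) fixes this.
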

\begin{proof}
The theorem is true for $a=0$ or $n=k$.
Now assume that $a\ge 1$ and $n\ge k+1$.
Consider the set ${\cal W}=\{W_1,W_2,\ldots,W_{k-a+2}\}$,
where $W_i=[k+1]\setminus\{i\}$ for $1\leq i\leq k-a+2$.
Let ${\cal W}$ be the set of all white vertices.
To show that ${\cal W}$ is a zero blocking set.
Suppose $S$ is a black vertex adjacent to some white vertex $W_j$.

If $|S\cap W_j|\le a-1$, then for $j'\neq j$,
$|S\cap W_{j'}|\leq |(S\cap W_{j})\cup\{j\}|\leq (a-1)+1=a$.

If $|S\cap W_j|=a$, then there exists $j'\in S\cap W_j$ such that $j'\le k-a+2$, since $|[k-a+3, k+1]|=a-1$.
In this case, $j'\ne j$ and $S$ is adjacent to $W_{j'}$, since
$|S\cap W_{j'}|\leq|((S\cap W_{j})\cup\{j\})\setminus\{j'\}|\le a+1-1=a$.

Hence, ${\cal W}$ is a zero blocking set as desired.
The theorem then follows.
\end{proof}

Consequently, $B(K(n,k,a))$ is monotonically non-increasing in $n$ for $n\ge 3k-2a+1$.

\begin{theorem}
If $n\ge 3k-2a+1$, then $B(K(n+1,k,a))\leq B(K(n,k,a))$.
\end{theorem}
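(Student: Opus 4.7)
The plan is to take a minimum zero blocking set ${\cal W} = \{W_1, W_2, \ldots, W_m\}$ of $K(n,k,a)$, where $m = B(K(n,k,a))$, and show that viewing ${\cal W}$ as a collection of $k$-subsets of $[n+1]$ still yields a zero blocking set of $K(n+1,k,a)$; this immediately gives $B(K(n+1,k,a)) \le m$.

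I argue by contradiction: suppose some $S \in \binom{[n+1]}{k} \setminus {\cal W}$ is adjacent in $K(n+1,k,a)$ to exactly one member of ${\cal W}$, say $W_1$, so $|S \cap W_1| \le a$ and $|S \cap W_i| \ge a+1$ for $i \ge 2$. If $S \subseteq [n]$, this directly contradicts ${\cal W}$ being a zero blocking set of $K(n,k,a)$, so the remaining case is $n+1 \in S$. Set $S' = S \setminus \{n+1\} \subseteq [n]$; then $|S'| = k-1$ and $|S' \cap W_i| = |S \cap W_i|$ for every $i$. My goal is to produce $x \in [n] \setminus S'$ such that $S'' := S' \cup \{x\}$ is a black vertex of $K(n,k,a)$ adjacent to exactly $W_1$, which contradicts ${\cal W}$ being a zero blocking set of $K(n,k,a)$.

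Since $S'' \supseteq S'$, the conditions $|S'' \cap W_i| \ge a+1$ for $i \ge 2$ are automatic. In the tight case $|S' \cap W_1| = a$, keeping $|S'' \cap W_1| \le a$ forces $x \notin W_1$, so I select $x$ from $[n] \setminus (S' \cup W_1)$, a set of size $n - (2k - a - 1) = n - 2k + a + 1$. The main obstacle is ensuring $S'' \notin {\cal W}$: a bad $x$ would give $S' \cup \{x\} = W_j$ for some $j$, requiring in particular $x \in W_j$, and the case $j = 1$ is impossible since $x \notin W_1$, leaving at most $m - 1$ bad values. Hence the valid $x$ count is at least $(n - 2k + a + 1) - (m - 1) = n - 2k + a - m + 2$; applying Theorem \ref{weak bound} to bound $m \le k - a + 2$ together with the hypothesis $n \ge 3k - 2a + 1$, this quantity is at least $(3k - 2a + 1) - 2k + a - (k - a + 2) + 2 = 1$, so a valid $x$ exists and the contradiction is complete. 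The easier subcase $|S' \cap W_1| < a$ imposes only the constraint $S'' \notin {\cal W}$, and the analogous count $n - k + 1 - m$ is larger, so it poses no additional difficulty.
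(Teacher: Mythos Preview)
Your proof is correct and follows essentially the same approach as the paper's: both take a minimum zero blocking set $\{W_1,\ldots,W_m\}$ of $K(n,k,a)$, assume some $S$ containing $n+1$ forces a unique $W_1$ in $K(n+1,k,a)$, and then replace $n+1$ by an element $x\in[n]$ to produce a black vertex of $K(n,k,a)$ forcing $W_1$, contradicting minimality. The only cosmetic difference is bookkeeping: the paper counts at least $m$ valid swaps and notes not all can be white, whereas you subtract the at most $m-1$ bad choices directly to get at least one good $x$; both invoke Theorem~\ref{weak bound} for $m\le k-a+2$ and the hypothesis $n\ge 3k-2a+1$ in the same way.
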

\begin{proof}
If $\{W_1,W_2,\ldots,W_m\}$ is a minimum zero blocking set of $K(n,k,a)$
but not a zero blocking set of $K(n+1,k,a)$,
then $K(n+1,k,a)$ has a black vertex $S$ containing $n+1$ that is adjacent to exactly one white vertex, say $W_1$.
By Theorem \ref{weak bound}, $m\le k-a+2$.
For every $S'=S\Delta\{x,n+1\}$ with $x\in [n]\setminus S$ and $|S'\cap W_{1}|\le a$,
we have $|S'\cap W_i|\ge |S\cap W_i|>a$ for $i\ge 2$.
Since $n\geq k+(k-1-a)+(k-a+2)\geq |W_{1}|+(|S\setminus\{n+1\} |-a)+m$,
there exists at least $m$ choices of $S'$,
so that one of them is a black vertex in $K(n,k,a)$ that forces $W_1$ to be black, a contradiction.
Hence, $\{W_1,W_2,\ldots,W_m\}$ is also a zero blocking set of $K(n+1,k,a)$ and
so $B(K(n+1,k,a))\leq B(K(n,k,a))$.
\end{proof}

\section{Results for generalized Johnson graphs $J(n,k,a)$}

First, all $J(n,k,a)$ with $B(J(n,k,a))$ no more than two are characterized.

\begin{proposition} \label{small2}
Suppose integers $n\geq k>a\geq 0$.
\begin{enumerate}
\itemsep -3pt
\item[$(1)$]
If $n\le 2k-a-1$, then $B(J(n,k,a))=1$.

\item[$(2)$]
If $n\geq 2$ and $k=1$, or $n=k+1=a+2$, then $B(J(n,k,a))=2$.

\item[$(3)$]
If $n=2k$ and $a=0$, then $B(J(n,k,a))=2$.

\item[$(4)$]
If $n=2k=4a>0$, then $B(J(n,k,a))=2$.

\item[$(5)$]
If $n\ge 2k-a$ and $k\ge 2$ but $(n,a)\ne (2k,0)$ and it is not the case that $n=2k=4a>0$
or $n=k+1=a+2$, then $B(J(n,k,a))\ge 3$.
\end{enumerate}
\end{proposition}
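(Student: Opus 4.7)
The plan is to verify parts (1)--(4) by directly identifying the graph $J(n,k,a)$, and for (5) to invoke Propositions \ref{isolated} and \ref{twins} by showing $J(n,k,a)$ has no isolated vertex and no pair of twins. For (1), any two $k$-subsets $A,B$ of $[n]$ satisfy $|A\cap B|\ge 2k-n\ge a+1$, so $J(n,k,a)$ is edgeless and $B=1$. For (2), $J(n,1,0)=K_n$ and $J(k+1,k,k-1)=K_{k+1}$, both giving $B=2$. For (3), $J(2k,k,0)$ is a perfect matching, so $B=2$. For (4), with $k=2a$ and $n=4a$, the identity $|C\cap A|+|C\cap\bar A|=k=2a$ for every $k$-set $C$ gives $|C\cap A|=a$ iff $|C\cap\bar A|=a$, so $A$ and $\bar A=[n]\setminus A$ are twins, while $n\ge 2k-a$ prevents isolated vertices.

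For (5), showing no isolated vertex is immediate from $n-k\ge k-a$: taking $a$ elements of $A$ and $k-a$ of $[n]\setminus A$ yields a neighbor. The substantive work is to rule out twins. Given distinct $k$-sets $A,B$ with $t=|A\cap B|$, I would try to construct $C\ne A,B$ with $|C\cap A|=a$ but $|C\cap B|\ne a$, parameterized by $(x,y,z,w)$ counting $|C|$ in $A\cap B$, $A\setminus B$, $B\setminus A$, and $[n]\setminus(A\cup B)$ respectively; the target is $x+y=a$, $x+z\ne a$, $x+y+z+w=k$, and the box constraints $x\le t$, $y,z\le k-t$, $w\le n-2k+t$.

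The case analysis splits on $t$. For $t=0$ with $n=2k$, $B=\bar A$ and $|C\cap B|=k-|C\cap A|$; the hypotheses $n\ne 2k=4a$ (so $k\ne 2a$) and $(n,a)\ne(2k,0)$ (so $a>0$) ensure any $C$ with $|C\cap A|=a$ distinguishes $A$ from $B$ and differs from both. For $t=0$ with $n>2k$, I would choose $p$ elements of $B$ and $k-a-p$ elements of $[n]\setminus(A\cup B)$; since $n\ge 2k+1$, the feasible range of $p$ contains at least two values, so $p\ne a$ is achievable. For $1\le t\le k-1$, the natural choice $x=\min(t,a)$, $y=a-x$, $z=0$, $w=k-a$ succeeds when $n\ge 3k-a-t$ and $t\ne a$, yielding $|C\cap B|=t\ne a$; otherwise one shifts mass between $z$ and $w$, or decreases $x$, to restore $x+z\ne a$ while keeping $C\notin\{A,B\}$. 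The exclusion $n\ne k+1=a+2$ handles the knife-edge case $t=a=k-1$, $n=k+1$, where no valid $(x,y,z,w)$ exists because $w=0$ is forced and the only candidates collapse to $C=B$.

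The main obstacle is this last tuning step in the case $t\ge 1$: when the natural construction yields $C\in\{A,B\}$ or produces $x+z=a$, one must verify that some alternative tuple works unless the parameters hit one of the excluded boundary configurations. Each excluded configuration corresponds to a genuine twin pair---the complement pair when $n=2k=4a$, the pairs in $K_{k+1}$ when $n=k+1=a+2$, and a matched pair when $(n,a)=(2k,0)$---and the proof must show that outside these, the $(x,y,z,w)$ box always contains a valid tuple distinguishing $A$ from $B$.
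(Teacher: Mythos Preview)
Your plan for parts (1)--(4) and the overall architecture for (5)---no isolated vertex, no twins, case analysis on $t=|A\cap B|$---match the paper's proof. The $t=0$ sub-cases are handled correctly.

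However, your treatment of $1\le t\le k-1$ contains a concrete error. With your ``natural choice'' $x=\min(t,a)$, $y=a-x$, $z=0$, $w=k-a$, one has $|C\cap B|=x+z=\min(t,a)$, not $t$. Thus for $t>a$ the natural choice gives $|C\cap B|=a$, so $C$ is adjacent to $B$ and does \emph{not} distinguish $A$ from $B$. You flag only $t=a$ and the box constraint $n<3k-a-t$ as triggers for the fallback, but the entire range $t>a$ needs a different construction. The paper handles $a<t\le k-1$ (its case (iii)) by taking $x=a$, $y=0$, $z=k-t$ (all of $B\setminus A$), and $w=t-a$, giving $|C\cap B|=a+(k-t)\ge a+1$; this is not obtainable by ``shifting mass'' from your starting tuple without first recognising that the starting tuple already fails.

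Your fallback for $0<t\le a$ is also too vague. For small $n$ (e.g.\ near $2k-a$) the box constraint on $w$ forces $z>0$, and avoiding $x+z=a$ while keeping $C\notin\{A,B\}$ can collapse the choices down to the excluded configuration $n=k+1=a+2$ only after a nontrivial chain of deductions. The paper's case (ii) does this by contradiction: it builds $C$ adjacent to $A$, uses the twin hypothesis to force $|C\cap B|=a$, then perturbs to $C'=C\Delta\{x,w\}$ with $|C'\cap B|=a+1$, and shows $C'=B$ forces exactly the excluded parameters. Your sketch does not indicate this two-step perturbation, and a single $(x,y,z,w)$ search as you describe will not by itself isolate the exception.
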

\begin{proof}
(1) This follows from $J(n,k,a)={n\choose k} K_1$.

(2) This follows from $J(n,1,0)=J(n,n-1,n-2)=K_n$.
Notice that the case of $n=k=1$ is included in (1).

(3) This follows from $J(2k,k,0)= \frac{1}{2}{2k\choose k}K_2$.

(4) Choose two vertices $A$ and $B$ with $A\cap B=\emptyset$.
If vertex $C$ is adjacent to $A$, then $|C\cap A|=a$ and so
$|C\cap B|=|C|-|C\cap A|=k-a=a$, implying that $C$ is adjacent to $B$.
Similarly, if $C$ is adjacent to $B$, then $C$ is adjacent to $A$.
Hence $A$ and $B$ are twins.
This gives $B(J(n,k,a))=2$.

(5) Since $n\ge 2k-a$, every vertex $A$ is adjacent to some vertex $B$ with $|A\cap B|=a$.
Hence $J(n,k,a)$ has no isolated vertices.

Next to see that $J(n,k,a)$ has no twins.
Suppose to the contrary that there are twins $A$ and $B$.
Choose $x\in A\setminus B$ and $y\in B\setminus A$.
\begin{enumerate}
\itemsep -1pt
\item[(i)] Suppose $A\cap B=\emptyset$.
Choose $A'\subseteq A$ with $|A'|=a$
and $B'\subseteq B$ with $y\in B'$ and $|B'|=k-a$.
Then $C=A'\cup B'$ is adjacent to $A$, and so $C=B$ or $C$ is adjacent to $B$.
If $C=B$, then $a=0$ and so $n>2k$, since $(n,a)\neq (2k,0)$.
If $C$ is adjacent to $B$, then $k-a=a>0$ and so $n>2k$, since it is not the case $n=2k=4a>0$.
Hence there exists $z\in [n]\setminus(A\cup B)$.
Then $C'=C\Delta \{y,z\}\notin \{A,B\}$ is adjacent to $A$ but not to $B$, since
$|C'\cap A|=|A'|=a$, $|C'\cap B|=|B\setminus\{y\}|=k-1\geq 1> 0=a$ when $C=B$,
and $|C'\cap B|=|B'\setminus \{y\}|=k-a-1=a-1$ when $C$ is adjacent to $B$.

\item[(ii)] Suppose $0<|A\cap B|\leq a$.
Since $A\cap B \neq\emptyset$, there exists $w\in A\cap B$.
Choose $A'$ with $((A\cap B)\cup\{x\})\setminus\{w\}\subseteq A'\subseteq A\setminus\{w\}$
and $|A'|=a$, and
$B'\subseteq B\setminus A$ with $|B'|=k-a$ by the fact that $|B\setminus A|\geq k-a$.
Vertex $C=A'\cup B'\notin \{A,B\}$ is adjacent to $A$, since $|C\cap A|=|A'|=a$.
Hence $C$ is adjacent to $B$, that is $|C\cap B|=a$.
Then $C'=C\Delta\{x,w\}\neq A$ is adjacent to $A$ but not to $B$, since $|C'\cap A|=|A'\Delta\{x,w\}|=|A'|=a$ and
$|C'\cap B|=|(C\cap B)\cup \{w\}|=a+1$.
Hence $C'=B$, that is $C=B\Delta\{x,w\}$,
which implies $a=|A\cap B|=|C\cap B|=k-1$
and $A\setminus \{x\}=A\cap B=B\setminus\{y\}$
with $|A\cup B|=k+1$.
Since it is not the case $n=k+1=a+2$,
there exists $z\in [n]\setminus(A\cup B)$.
Hence $C''=C\Delta \{y,z\}\neq \{A,B\}$ is adjacent to $A$ but not to $B$, since $|C''\cap A|=|A'|=a$ and $|C''\cap B|=|B\setminus\{w,y\}|=k-2=a-1$.

\item[(iii)] Suppose $a< |A\cap B|\leq k-1$.
Choose $C'\subsetneqq A\cap B$ with $|C'|=a$,
and $C''\subseteq [n]\setminus A$ with $B\setminus A\subsetneqq C''$ and $|C''|=k-a$ by the fact that
since $|B\setminus A|<k-a$.
Then $C=C'\cup C''\notin\{A,B\}$ is adjacent to $A$ but not to $B$, since $|C\cap A|=|C'|=a$ and $|C\cap B|=|C'|+|B\setminus A| \geq a+|\{y\}|=a+1$.
\end{enumerate}

In summary, $J(n,k,a)$ has no isolated vertices and has no twins.
These give that $B(J(n,k,a))\ge 3$ by Propositions \ref{isolated} and \ref{twins}.
\end{proof}

Notice that $J(n,k,0)=K(n,k)$ and $J(n,k,k-1)=J(n,k)$.
So we may consider $J(n,k,a)$ only for $k-2\ge a\ge 1$.

\begin{theorem} \label{upper bound}
If $a\ge 1$, then $B(J(n,k,a))\le \max\{a,k-a\}+2$.
\end{theorem}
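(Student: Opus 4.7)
The strategy is to generalize the explicit construction used in Theorem \ref{weak bound}. Set $m = \max\{a, k-a\} + 2$ and take $\mathcal{W} = \{W_1, W_2, \ldots, W_m\}$ with $W_i = [k+1] \setminus \{i\}$. (When $n = k$ the graph is $K_1$ and the bound is trivial, so one may assume $n \ge k+1$.) Since $\max\{a, k-a\} \le k-1$, we have $m \le k+1$, so $[m] \subseteq [k+1]$ and each $W_i$ is a valid $k$-subset of $[n]$. The goal is then to verify that no black vertex $S$ is adjacent in $J(n,k,a)$ to exactly one member of $\mathcal{W}$.

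The key arithmetic observation is that for distinct $j, j' \in [m]$ the set $W_{j'}$ is obtained from $W_j$ by removing $j'$ and inserting $j$, so
$$|S \cap W_{j'}| = |S \cap W_j| + |\{j\} \cap S| - |\{j'\} \cap S|.$$
Hence, if $S$ is adjacent to $W_j$ (i.e.\ $|S \cap W_j| = a$), then $|S \cap W_{j'}| = a$ precisely when $j$ and $j'$ either both lie in $S$ or both lie outside $S$. Showing that $\mathcal{W}$ blocks therefore reduces to finding, for every such pair $(S,j)$, an index $j' \in [m] \setminus \{j\}$ on the same side of $S$ as $j$.

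I would handle this by splitting on whether $j \in S$. When $j \in S$, the equation $|S \cap W_j| = a$ forces $|S \cap [k+1]| = a+1$, and so $|S \cap [m]| \ge (a+1) - (k+1-m) = m - (k-a)$. When $j \notin S$, one instead gets $|S \cap [k+1]| = a$ and hence $|[m] \setminus S| \ge m - a$. Each of these lower bounds must be at least $2$ to produce a valid $j' \ne j$, which translates into the two requirements $m \ge (k-a) + 2$ and $m \ge a + 2$; the value $m = \max\{a, k-a\} + 2$ is exactly the minimal choice satisfying both simultaneously. The only mild obstacle is noticing this asymmetry between the two cases, which is precisely what forces the $\max$ in the bound rather than $a+2$ or $(k-a)+2$ alone; once $m$ is set correctly the remaining verification is a routine counting check.
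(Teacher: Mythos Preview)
Your proposal is correct and essentially identical to the paper's argument: both take $m=\max\{a,k-a\}+2$, use the family $W_i=[k+1]\setminus\{i\}$, and verify that any $S$ with $|S\cap W_j|=a$ hits another $W_{j'}$ by splitting on whether $j\in S$ and locating a second index in $[m]$ on the same side of $S$. The only cosmetic difference is that the paper phrases the counting via $|[m+1,k+1]|<a<|[m]\setminus\{i\}|$ to extract the companion index from $S\cap W_j$ or $W_j\setminus S$, whereas you bound $|S\cap[m]|$ and $|[m]\setminus S|$ directly; these are the same inequalities rewritten.
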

\begin{proof}
The theorem is true for $n=k$, since $B(J(k,k,a))=1$ by Proposition \ref{small2} (1).
Now assume that $n\ge k+1$.

Let $m=\max\{a,k-a\}+2$.
Consider the set ${\cal W}=\{W_1,W_2,\ldots,W_m\}$ of $k$-subsets of $[n]$,
where $W_i=[k+1]\setminus \{i\}$ for $1\leq i\leq m$.
Let ${\cal W}$ be the set of all white vertices.
We verify that ${\cal W}$ is a zero blocking set by showing that
any black vertex $S$ adjacent to some $W_i$ is adjacent to another $W_{i'}$ with $i'\ne i$ as follows.
Since $|S\cap W_i|=a$ and $|[m+1,k+1]|<a<|[m]\setminus\{i\}|$,
there exists $i_1,i_2\in [m]\setminus\{i\}$ such that
$i_1\in S\cap W_i$ and $i_2\in W_i\setminus S$.
If $i\in S$, then $|S\cap W_{i_1}|=|S\cap (W_i\Delta \{i,i_1\})|=a+1-1=a$.
If $i\notin S$, then $|S\cap W_{i_2}|=|S\cap (W_i\Delta \{i,i_2\})|=|S\cap W_i| =a$.
Hence, $\cal W$ is a zero blocking set of $J(n,k,a)$ and so
$B(J(n,k,a)) \le m$ as desired.
\end{proof}

By Proposition \ref{change k a},
$B(J(n,k,a))=B(J(n,n-k,n-2k+a))\leq \max\{k-a,n-2k+a\}+2$ for $n\ge 2k-a$.
This upper bound is smaller than $\max\{a,k-a\}+2$ for $2k-a\le n<2k$ and $a>k/2$.
Although the exact value of $B(J(n,k,a))$ is not known in general, the following shall prove that the upper bound $\max\{a,k-a\}+2$ is in fact the value of $B(J(n,k,a))$ when $n$ is relatively larger than $k$ and $a$.

First, two useful lemmas.

\begin{lemma} \label{bound}
In $J(n,k,a)$,
if vertices $A$ and $B$ have a common neighbor $C$,
then $|A\setminus B|\le 2(k-a)$.
\end{lemma}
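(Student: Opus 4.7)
The plan is to prove the bound via a single set inclusion. The key observation is that every element of $A\setminus B$ is either in $C$ or not: if it is not in $C$, it belongs to $A\setminus C$, and if it is in $C$, then (since it avoids $B$) it belongs to $C\setminus B$. This gives
$$A\setminus B\subseteq (A\setminus C)\cup(C\setminus B),$$
so taking cardinalities and using subadditivity yields $|A\setminus B|\le |A\setminus C|+|C\setminus B|$.

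Next I would invoke the adjacency hypothesis. Because $C$ is a common neighbor of $A$ and $B$ in $J(n,k,a)$, the definition of the generalized Johnson graph gives $|A\cap C|=|B\cap C|=a$. Since $|A|=|B|=|C|=k$, it follows that $|A\setminus C|=k-a$ and $|C\setminus B|=k-a$, and combining with the inclusion above yields $|A\setminus B|\le 2(k-a)$, as required.

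There is no substantial obstacle: the statement is essentially a triangle-inequality-style bound for the symmetric-difference distance on $k$-subsets, and its proof reduces to one set containment plus two cardinality counts coming directly from the adjacency condition. The argument is also oblivious to any constraints among $n$, $k$, $a$ beyond $n\ge k>a\ge 0$, so no case analysis or auxiliary results from the earlier sections are needed. This clean form is convenient, since the lemma will presumably be plugged into subsequent arguments that require only the inequality $|A\setminus B|\le 2(k-a)$ (equivalently $|A\cap B|\ge 2a-k$) for pairs sharing a common neighbor.
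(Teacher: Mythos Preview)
Your proof is correct and matches the paper's approach exactly: the paper also uses the inclusion $A\setminus B\subseteq (A\setminus C)\cup(C\setminus B)$ and then computes $|A\setminus C|+|C\setminus B|=2(k-a)$ from the adjacency condition. Your write-up simply adds a bit more explanation of why the inclusion and the cardinality counts hold.
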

\begin{proof}
Since $A\setminus B \subseteq (A\setminus C) \cup (C\setminus B)$,
it follows that $|A\setminus B| \le |A\setminus C| + |C\setminus B|=2(k-a)$.
\end{proof}

\begin{lemma} \label{S1 S2}
Suppose $S_1,S_2,\ldots,S_p$ are $p\ge 2$ pairwise disjoint sets and $a_1,a_2,\ldots,a_p$ are positive integers.
If $|S_1|\ge 3$ and $|S_2|\ge 2$,
and $2\le t\le (\sum_{i=1}^p|S_i|)-p-1$,
then there exists $S\subseteq \bigcup_{i=1}^{p} S_{i}$
such that $|S|=t$ and $|S\cap S_{i}|\neq a_{i}$ for $1\leq i\leq p$.
\end{lemma}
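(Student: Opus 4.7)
The plan is to reduce the problem to a purely numerical one: find nonnegative integers $b_1,\ldots,b_p$ with $b_i\le s_i$ (writing $s_i=|S_i|$), $b_i\ne a_i$ for all $i$, and $\sum_{i=1}^p b_i=t$. Once such a tuple exists, the desired set $S$ is obtained by picking any $b_i$-element subset of each $S_i$ and taking their union, so that $|S\cap S_i|=b_i\ne a_i$ for every $i$ and $|S|=t$. I would establish the existence of such a tuple by induction on $p$.

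For the base case $p=2$, the number of pairs $(b_1,b_2)$ with $b_1+b_2=t$ and $0\le b_i\le s_i$ equals $\min(t+1,\,s_1+1,\,s_2+1,\,s_1+s_2-t+1)$, which is at least $3$ under the hypotheses $s_1\ge 3$, $s_2\ge 2$, and $2\le t\le s_1+s_2-3$. Since each of the forbidden conditions $b_1=a_1$ and $b_2=a_2$ excludes at most one pair, at least one pair survives with $b_1\ne a_1$ and $b_2\ne a_2$. For the inductive step $p\ge 3$, I would pick $b_p\in\{0,1,\ldots,s_p\}\setminus\{a_p\}$ so that $t':=t-b_p$ lies in $[2,\,(N-s_p)-p]$ (with $N=\sum s_i$), then apply the inductive hypothesis to $(S_1,\ldots,S_{p-1})$ with target $t'$; this confines $b_p$ to the integer interval $I=[\max(0,\,t-N+s_p+p),\,\min(s_p,\,t-2)]$, which I would verify is nonempty using $\sum_{i<p}s_i\ge p+2$ (guaranteed by $s_1\ge 3$, $s_2\ge 2$, and $p\ge 3$). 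In most subcases $I$ contains at least two integers, so deleting $a_p$ still leaves a choice; when $I$ is a singleton whose element is $0$, the choice is legitimate because $a_p\ge 1$.

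The main obstacle is the degenerate subcase where $I$ contains a single integer equal to $a_p$: tracing the equalities shows this forces $s_1=3$, $s_2=2$, $s_i=1$ for $3\le i<p$, and $t=a_p+2$. To dispose of this corner I would either re-index to reduce on a different $S_i$ (any $S_i$ with $s_i=1$ forces $b_i=0$, which is legal because $a_i\ge 1$ and leaves $s_1\ge 3$, $s_2\ge 2$ intact), or strengthen the $p=2$ base case to admit $t'\in\{0\}\cup[2,\,s_1+s_2-2]$ by a slightly finer count of surviving pairs, which enlarges the menu of admissible $b_p$ enough to avoid the single forbidden value $a_p$.
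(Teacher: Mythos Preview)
Your reduction to the integer problem and the induction on $p$ are sound, and your base case and the non-degenerate inductive step are correctly argued. This is a genuinely different route from the paper's proof, which is a direct construction: the paper first picks a size-$t$ set $S_0\subseteq\bigcup_i(S_i\setminus\{x_i\})\setminus\{y_1\}$ meeting $S_1$ and $S_2$, then repairs the ``bad'' indices $I=\{i:|S_0\cap S_i|=a_i\}$ by a parity swap, adding some reserved points $x_i$ and deleting some points $w_i\in S_0\cap S_i$ (with an extra reserved point $y_1\in S_1$ to break an odd count). Your counting-plus-induction argument is more modular and arguably easier to verify, while the paper's swap construction is explicit and avoids induction altogether.

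There is one loose end in your corner case. You present the two fixes as alternatives, but the first (re-index onto some $S_i$ with $s_i=1$) does not apply when $p=3$: in the degenerate configuration $s_1=3$, $s_2=2$ there need be no index $i$ with $s_i=1$ (your own description ``$s_i=1$ for $3\le i<p$'' is vacuous for $p=3$, and $s_3=s_p$ can be large). So for $p=3$ you must use the second fix. That fix does go through: with $s_1=3$, $s_2=2$ the strengthened base case admits $t'\in\{0\}\cup[2,3]$, hence the admissible values of $b_3$ include $\{t-3,t-2\}\cap[0,s_3]$ (and also $t$ when $t\le s_3$), which always contains at least one value other than $a_3$ for every $t$ in range. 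For $p\ge4$ the degenerate configuration does provide some $S_j$ with $s_j=1$ and $3\le j<p$; setting $b_j=0$ and invoking the induction hypothesis on the remaining $p-1$ sets is legitimate because the bound $t\le N-p-1$ is preserved. So the argument closes once you combine the two fixes rather than offering them as independent alternatives.
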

\begin{proof}
Choose $x_i\in S_i$ for $S_i\ne\emptyset$ and $y_1\in S_1\setminus \{x_1\}$.
Since $2\le t\le (\sum_{i=1}^p|S_i|)-p-1$,
there exists $S_0\subseteq (\bigcup_{i=1}^p (S_i\setminus \{x_i\})) \setminus \{y_1\}$
such that $|S_0|=t$ and $S_0\cap S_i$ contains some $z_i$ for $i=1,2$.
Let $I=\{i : 1\le i\le p, |S_0\cap S_i|=a_i\}$.
Since $a_i>0$, there exists $w_i\in S_0\cap S_i$ for $i\in I$.
Suppose $I=\{i_1,i_2,\ldots,i_r\}$.

If $r=2s$ is even, then $S = (S_0 \cup \{x_{i_1}, x_{i_2}, \ldots, x_{i_s}\})\setminus \{w_{i_{s+1}},w_{i_{s+2}},\ldots,w_{i_{2s}}\}$ is as desired.
Now suppose $r=2s+1$ is odd with $s\ge 0$.

If $1\notin I$ or $2\notin I$, say $i_0\in\{1,2\}$ but $i_0\notin I$.
If $|S_0\cap S_{i_0}|+1\ne a_{i_0}$, then $S = (S_0 \cup \{x_{i_0},x_{i_1}, x_{i_2}, \ldots, x_{i_s}\})\setminus \{w_{i_{s+1}},w_{i_{s+2}},\ldots,w_{i_{2s+1}}\}$ is as desired.
Otherwise, $|S_0 \cap S_{i_0}|-1\ne a_{i_0}$ and so $S = (S_0 \cup \{x_{i_1}, x_{i_2}, \ldots, x_{i_{s+1}}\})$ $\setminus \{w_{i_{s+2}},w_{i_{s+3}},\ldots,w_{i_{2s+1}},z_{i_0}\}$ is as desired.

Now suppose $1\in I$ and $2\in I$, say $1=i_1$, then $r=2s+1\ge 3$ and so
$S = (S_0 \cup \{y_1, x_{i_1}, x_{i_2},\ldots, x_{i_s}\})\setminus \{w_{i_{s+1}},w_{i_{s+2}},\ldots,w_{i_{2s+1}}\}$ is as desired.
\end{proof}

Now consider the case of $a\ge k/2$.

\begin{theorem}
If $n\ge (a+\frac52)(k-a+2)-6$ and $k-2\ge a\ge k/2$,
then $B(J(n,k,a))= a+2$.
\end{theorem}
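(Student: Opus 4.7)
By Theorem \ref{upper bound}, $B(J(n,k,a)) \le \max\{a,k-a\}+2 = a+2$ since $a \ge k/2$. For the matching lower bound, the plan is to argue by contradiction: suppose $\mathcal{W} = \{W_1,\ldots,W_m\}$ is a zero blocking set with $m \le a+1$. Proposition \ref{small2}(5) gives $m \ge 3$. To contradict the zero blocking property, I aim to exhibit $S \notin \mathcal{W}$ that is adjacent to $W_1$ but to no other $W_j$; that is, $|S \cap W_1| = a$ and $|S \cap W_j| \ne a$ for every $j \ge 2$.

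A first reduction comes from Lemma \ref{bound}: if $|W_1 \setminus W_j| > 2(k-a)$, then $W_1$ and $W_j$ share no common neighbor, so $|S \cap W_j| \ne a$ holds automatically for every neighbor $S$ of $W_1$. Hence I may assume $d_j := |W_j \setminus W_1| \le 2(k-a)$ for every $j \ge 2$. The construction of $S$ then follows the template of Lemma \ref{Lemma 2}, tailored to Johnson by targeting $|S \cap W_j| \ge a+1$. Pick $Y_j \subseteq W_j \setminus W_1$ and a nested family $X'_{d_j} \subseteq W_1 \cap \bigcap_{d_i \le d_j} W_i$ (with $X'_k \subseteq \cdots \subseteq X'_{d_0}$) such that $|Y_j| + |X'_{d_j}| \ge a+1$, and form $Z = \bigcup_j Y_j \cup X'_{d_0}$, yielding $|Z \cap W_1| \le a$ and $|Z \cap W_j| \ge a+1$ for every $j \ge 2$. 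Finally, extend $Z$ to a $k$-subset $S$ with $|S \cap W_1| = a$ by drawing the additional elements from the relevant regions of $[n] \setminus W_1$ so as to preserve $|S \cap W_j| \ne a$; Lemma \ref{S1 S2}, applied to a disjoint partition of $[n] \setminus W_1$ according to $W_j$-membership, would guarantee this is possible and, along the way, ensure $S \ne W_j$ for every $j$.

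The main obstacle is the tight size budget. In the Kneser setting, the inequality $|S \cap W_1| \le a$ permits $|\bigcup_j Y_j|$ to grow up to $(a+1)(m-1) \le k$; here the equality $|S \cap W_1| = a$ forces $|S \setminus W_1| = k-a$, which under $a \ge k/2$ is at most $a$, so $|\bigcup_j Y_j| \le k-a$ is considerably tighter. I expect a case split according to whether $d_j \le k-a$ (so that $|X'_{d_j}|$ can take most of the weight $a+1$) or $d_j > k-a$ (so that $|Y_j|$ must carry the bulk), and possibly overlapping choices of $Y_j$ (sharing an element of $\bigcap_{j \ge 2}(W_j \setminus W_1)$ when nonempty) to economise. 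The explicit constant $(a+\tfrac{5}{2})(k-a+2)-6$ appears calibrated precisely so that, after this delicate apportionment, enough free elements remain in the admissible region of $[n]$ to complete the extension and to ensure $S$ differs from every $W_j \in \mathcal{W}$.
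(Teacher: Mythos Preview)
Your plan has a genuine gap in the construction of $S$. The Kneser template of Lemma \ref{Lemma 2}, targeting $|S\cap W_j|\ge a+1$ via $Y_j\subseteq W_j\setminus W_1$ and nested $X'_{d_j}\subseteq W_1$, collapses in the regime $a\ge k/2$. The inequality $|X_d|\ge k-d(m-1)\ge a+1-d$ underpinning that template needs $k\ge(a+1)(m-1)$; here $m-1$ can be as large as $a$, and the resulting requirement $k-a-1\ge d(a-1)$ already fails for $d\ge 2$ once $a\ge 2$. Moreover the exterior budget $|S\setminus W_1|=k-a\le a$ must absorb all of $\bigcup_j Y_j$ for up to $a$ indices $j$, with no guaranteed overlap among the $W_j\setminus W_1$; your suggested fixes (overlapping $Y_j$, a case split on $d_j$) are not a construction. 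Finally, invoking Lemma \ref{S1 S2} at the extension step on ``a disjoint partition of $[n]\setminus W_1$ according to $W_j$-membership'' does not match its hypotheses, since the sets $W_j\setminus W_1$ need not be pairwise disjoint.

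The paper's argument runs in the opposite direction. Instead of pushing $|S\cap W_j|$ above $a$, it fixes $X\subseteq W_1$ with $|X|=a$ containing, for each $i\ge 2$, some $x_i\in W_1\setminus W_i$; this already forces $|X\cap W_j|\le a-1$. Writing $d=k-a$ and $I=\{i\ge 2:|W_i\setminus W_1|\le 2d\}$ as in your reduction, one then seeks $Y\subseteq[n]\setminus W_1$ of size $d$ so that $S=X\cup Y$ is adjacent only to $W_1$. If $U=[n]\setminus(W_1\cup\bigcup_{i\in I}W_i)$ has at least $d$ elements, take $Y\subseteq U$: then $Y\cap W_j=\emptyset$ for $j\in I$, so $|S\cap W_j|=|X\cap W_j|<a$. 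Otherwise $|U|\le d-1$, and the hypothesis on $n$ forces $\sum_{i\in I}|D_i|\ge a+k$, where $D_i=(W_i\setminus W_1)\setminus\bigcup_{j\in I,\,j\ne i}W_j$; these $D_i$ \emph{are} pairwise disjoint, and Lemma \ref{S1 S2} with $t=d$ and targets $a_i=a-|W_i\cap X|>0$ produces $Y$ with $|Y\cap D_i|\ne a_i$, whence $|S\cap W_j|=|X\cap W_j|+|Y\cap D_j|\ne a$. The idea you are missing is this ``$x_i$ trick'', which pushes $|S\cap W_j|$ \emph{below} $a$ from the roomy side $|S\cap W_1|=a$, rather than above it from the cramped side $|S\setminus W_1|=k-a$.
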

\begin{proof}
By Lemma \ref{upper bound}, it is only necessary to prove that $B(J(n,k,a))\ge a+2$.
For this purpose, let $d=k-a$ and so $d\ge 2$.

Suppose to the contrary that $J(n,k,a)$ has a minimum zero blocking set $\{W_1,W_2,\ldots,W_m\}$ with $m\le a+1$.
Choose $x_i\in W_1\setminus W_i$ for $2\le i\le m$
and extend $\{x_2,x_3,\ldots,x_m\}$ to a set $X$
with $X\subseteq W_1$ and $|X|=a$.
Let $I=\{i\ge 2: |W_i\setminus W_1|\le 2d\}$
and $U=[n]\setminus \bigcup_{i\in \{1\}\cup I} W_i$.

If $|U|\ge d$, then choose $Y\subseteq U$ with $|Y|=d$.
Then, $|(X\cup Y)\cap W_1|=|X|=a$ and $x_i\in (X\cup Y)\setminus W_i$ for $2\le i\le m$.
Hence, $X\cup Y$ is a black vertex adjacent to $W_1$.
So, $X\cup Y$ must be adjacent to another white vertex $W_j$.
By Lemma \ref{bound}, $|W_j\setminus W_1|\le 2d$ and so $j\in I$.
By the definition of $U$ and $Y$, $Y\cap W_j=\emptyset$,
Hence $a=|(X\cup Y)\cap W_j|=|X\cap W_j|\le |X\setminus \{x_j\}|<a$, a contradiction.
In conclusion, $|U|\le d-1$.

For $i\in I$, let $W_i'=W_i\setminus W_1$ and $D_i=W_i'\setminus \bigcup_{j\ne i, j\in I} W_j'$.
Notice that $U=[n]\setminus (W_1\cup \bigcup_{i\in I} W_i')$.
It is clear that
$
   \sum_{i\in I}|W_i'| + \sum_{i\in I}|D_i| \ge 2|\bigcup_{i\in I} W_i'|.
$
Now, $|\bigcup_{i\in I} W_i'|=n-|W_1|-|U|\ge n-k-d+1$, $|I|\le m-1\le a$ and $|W_i'|\le 2d$ for $i\in I$.
In summary,
$\sum_{i\in I}|D_i| \ge 2|\bigcup_{i\in I} W_i'| -  \sum_{i\in I}|W_i'|
\ge 2(n-k-d+1)-2da \ge a+k$ by the assumption that $n\ge (a+\frac52)(k-a+2)-6$.

There exist $i,j\in I$ with $i\neq j$ such that $|D_i|\ge 3$ and $|D_j|\ge 2$,
since $\sum_{i\in I} |D_i|\ge a+k\geq |I|+\max\{|I|+1,|D_i|: i\in I\}$,
which also implies $|I|\geq 2$.
Also, $d\le a+k-a-1\le (\sum_{i\in I} |D_i|)-|I|-1$.
Now apply Lemma \ref{S1 S2}
by taking $\{S_1,S_2,\ldots,S_p\}=\{D_i: i\in I\}$,
$a_i=a-|W_i\cap X|$ for $i \in I$ and $t=d$,
to get $S\subseteq \bigcup_{i\in I} D_i$ with $|S|=d$
and $|S\cap D_i|\ne a_i$ for $i\in I$.
Then $X\cup S$ is a black vertex adjacent to $W_1$.
For $i\in I$, $|(X\cup S)\cap W_i|=|X\cap W_i|+|S\cap D_i|\ne a$.
For other $i \in [2,m]\setminus I$, $X\cup S$ is not adjacent to $W_{i}$ by Lemma \ref{bound}.
Hence the black vertex $X\cup S$ is adjacent to exactly one white vertex $W_1$,
a contradiction.
\end{proof}

Next consider the case of $a <k/2$.

\begin{theorem}
If $n\ge k(k-a+2)-a$ and $k/2>a\ge 1$, then $B(J(n,k,a))=k-a+2$.
\end{theorem}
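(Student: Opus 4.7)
By Theorem~\ref{upper bound}, since $k/2 > a$ gives $\max\{a,k-a\}=k-a$, we have $B(J(n,k,a)) \le k-a+2$; the substantive content is the matching lower bound.

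My plan is to argue by contradiction: assume $\{W_1,\ldots,W_m\}$ is a minimum zero blocking set with $m \le k-a+1$, and exhibit a black vertex $S$ with $|S\cap W_1|=a$ and $|S\cap W_j|\ne a$ for all $j\ge 2$, which contradicts the blocking property. Set $d=k-a$. The hypothesis $n\ge k(k-a+2)-a$ enters first through the computation $|\bigcup_{j=1}^m W_j|\le mk\le (k-a+1)k$, which yields a ``free region'' $U_0=[n]\setminus\bigcup_j W_j$ of size at least $n-(k-a+1)k\ge k-a=d$.

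The principal construction is to write $S=X\cup Y$ with $X\subseteq W_1$, $|X|=a$, and $Y\subseteq U_0$, $|Y|=d$; then $|S\cap W_1|=a$ automatically, and $|S\cap W_j|=|X\cap W_j|$ for $j\ge 2$, so it suffices to pick $X$ with $X\not\subseteq W_j$ for every $j\ge 2$, i.e., a transversal of the family $\{W_1\setminus W_j:j\ge 2\}$. This succeeds whenever such a transversal of size at most $a$ exists (in particular whenever $m-1\le a$, which is automatic when $a$ is close to $k/2$).

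The hard case is when no $a$-transversal exists, that is, every $a$-subset of $W_1$ is contained in some $W_j$ with $j\ge 2$. For this I would mirror the Lemma~\ref{S1 S2}-based strategy of the previous theorem: restrict to the set $I=\{j\ge 2:|W_j\setminus W_1|\le 2d\}$, which is all that matters by Lemma~\ref{bound}; form the pairwise disjoint private parts $D_j=(W_j\setminus W_1)\setminus\bigcup_{j'\in I,\,j'\ne j}(W_{j'}\setminus W_1)$; use the bound on $n$ to show $\sum_{j\in I}|D_j|$ is large; and then apply Lemma~\ref{S1 S2} to pick $Y\subseteq\bigcup_{j\in I}D_j$ of size $d$ with $|Y\cap D_j|\ne a-|X\cap W_j|$ for each $j\in I$. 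The main obstacle is that, unlike the $a\ge k/2$ companion theorem, one cannot in general choose $X$ making every forbidden value $a-|X\cap W_j|$ strictly positive, which is a hypothesis needed to invoke Lemma~\ref{S1 S2} directly; the indices with $X\subseteq W_j$ must be handled separately by forcing $Y$ to meet the associated $D_j$, and verifying compatibility of this side condition with the remaining intersection constraints (together with the size hypotheses $|D_{j_1}|\ge 3$, $|D_{j_2}|\ge 2$, and $d\le \sum_{j\in I}|D_j|-|I|-1$ of Lemma~\ref{S1 S2}) is the technical heart of the argument and the place where the strong bound $n\ge k(k-a+2)-a$ is essential.
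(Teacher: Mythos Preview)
Your proposal has a genuine gap: the ``hard case'' is only outlined, not proved. You correctly identify the obstruction to invoking Lemma~\ref{S1 S2}---the targets $a_j=a-|X\cap W_j|$ vanish whenever $X\subseteq W_j$---and you propose to force $Y$ to meet the corresponding $D_j$, but you do not verify that this extra requirement is compatible with the remaining constraints $|Y\cap D_j|\ne a_j$ and with the size hypotheses of Lemma~\ref{S1 S2}, nor that the bound $n\ge k(k-a+2)-a$ is exactly what is needed. Even granting a suitable $Y$, the vertex $S=X\cup Y$ with $Y\subseteq\bigcup_{j\in I}D_j$ can coincide with some $W_j$, so $S$ may be white and a further ``multiplicity of choices'' argument would still be required; none is supplied.

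The paper sidesteps the transversal question by a different and simpler device. Fix a minimum transversal $W_1'$ of $\{W_1\setminus W_i:i\ge 2\}$ and set $W_1''=W_1\setminus W_1'$. Starting from \emph{any} $X_0\subseteq W_1''$ of size $a$, the paper \emph{pushes intersections above} $a$ rather than below: whenever $|X\cap W_i|=a$ for some $i\ge 2$, adjoin an element of $W_i\setminus(W_1\cup X)$, raising that intersection to $a+1$. Monotonicity ensures each $i$ is treated at most once, so after at most $m-1$ steps $|X|\le a+(m-1)\le k$ and $|X\cap W_1|=a\ne|X\cap W_i|$ for all $i\ge 2$; then pad from $[n]\setminus\bigcup_j W_j$. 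The resulting $X$ must be white, say $X=W_2$, and symmetrically $W_1$'s only white neighbor is $W_2$. From $W_2\cap W_1\subseteq W_1''$ one deduces $|W_1'|<k-a$, hence $|W_1''|>a$, so there are several choices of $X_0$, each forcing a distinct value of $W_2\cap W_1=X_0$---the contradiction. The iterative ``push above $a$'' step is the idea your plan is missing; it removes the need to find an $a$-sized transversal at all.
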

\begin{proof}
By Lemma \ref{upper bound}, it is only necessary to prove that $B(J(n,k,a))\ge k-a+2$.
Suppose to the contrary that $J(n,k,a)$ has a minimum zero blocking set $\{W_1,W_2,\ldots,W_m\}$ with $m\le k-a+1$.
Choose $x_i\in W_1\setminus W_i$ for $2\le i\le m$.
Let $W_1'=\{x_2,x_3,\ldots,x_m\}$ and $W_1''=W_1\setminus W_1'$.
Assume that $W_1'$ is chosen so that $r=|W_1'|$ is minimum.
Notice that $r\le m-1\le k-a$ and so $|W_1''|\ge a$.

Choose $X_0\subseteq W_1''$ with $|X_0|=a$.
Let $X=X_0$ initially and update it at most $m-1$ times as follows.
If there exists $2\le i\le m$ such that $|W_i\cap X|=a$, then
$|W_i\cap (W_1\cup X)| \le |W_1\setminus (X\cup \{x_i\})|+|W_i\cap X|=(k-a-1)+a<|W_i|$ and so there is some $y_i\in W_i\setminus (W_1\cup X)$.
For such $i$, add $y_i$ into $X$.
Now $|W_i\cap X|>a$.
Repeating this process at most $m-1$ times results a set $X$ with $a\le |X|\le a+m-1\le k$.
Also $|W_1\cap X|=a \ne |W_i\cap X|$ for $2\le i\le m$.
Since $n\ge k(k-a+2)-a\ge km+k-a$, we can add $k-|X|$ elements in $[n]\setminus\bigcup_{i=1}^m W_i$ into $X$.
Now $|X|=k$ with $X\cap W_1=X_0\subseteq W_1''$ and $X$ is adjacent to exactly one white vertex $W_{1}$,
so $X$ must be white, say $X=W_2$.
Similarly, $W_2$ has a white neighbor, which must be $W_1$,
not adjacent to other white vertices.

Let $I=\{i: 2\le i\le m, W_i\cap W_1'\ne\emptyset\}$.
If $I=\emptyset$, then $r=1<k-a$, else $r\le |I|$ since $\{x_i: i\in I\}$ has the same property as $W_1'$ and $W_1'$ is chosen so that $r=|W_1'|$ is minimum.
Since $2\notin I$, we have $r\le |I|\le m-2<k-a$.
By the arguments in the previous paragraph, each subset $X_0\subseteq W_1''$ with $|X_0|=a$ corresponds to a white vertex $W_j$ with $2\le j\le m$ such that $W_j\cap W_1=X_0\subseteq W_1''$, adjacent to exactly one white vertex $W_1$, which must be $W_2$.
But there is more than one choice for $X_0$, since $r<k-a$, a contradiction.

The theorem then follows.
\end{proof}

\bigskip
\noindent
{\bf Acknowledgement.}
This work was supported in part by the National Science and Technology Council in
Taiwan under grant NSTC 113-2115-M-A49-013.


\begin{thebibliography}{99}

\bibitem{2008a}
A. Aazami,
Hardness results and approximation algorithms for some problems on graphs,
Ph.D. thesis, University of Waterloo, 2008.

\bibitem{2024agm}
F. Afzali, A. H. Ghodrati and H. R. Maimani,
Failed zero forcing numbers of Kneser graphs, Johnson graphs, and hypercubes,
{\it J. Applied Math. Comput.}, {\bf 70} (2024), 2665--2675.

\bibitem{2008AIM}
AIM Minimum Rank --- Special Work Group,
Zero forcing sets and the minimum rank of graphs,
{\it Linear Algebra Appl.}, {\bf 428} (2008), 1628--1648.

\bibitem{2020apnaA}
P. A. B. Pelayo and M. N. M. Abara,
Maximal failed zero forcing sets for products of two graphs (2022).
arXiv: 2202.04997

\bibitem{2020apnaB}
P. A. B. Pelayo and M. N. M. Abara,
Minimum rank and failed zero forcing number of graphs (2022).
arXiv:2202.04993

\bibitem{2016ajps}
T.  Ansill, B. Jacob, J. Penzellna and D. Saavedra,
Failed skew zero forcing on a graph,
{\it Linear Algebra Appl.}, {\bf 509} (2016), 40--63.


\bibitem{2020bccknt}
M. Beaudouin-Lafon, M. Crawford, S. Chen, N. Karst, L. Nielsen and D. S. Troxell,
On the zero blocking number of rectangular, cylindrical, and M\"obius grids,
{\it Discrete Appl. Math.}, {\bf 282} (2020), 35--47.

\bibitem{2007bg}
D. Burgarth and V. Giovannetti,
Full control by locally induced relaxation,
{\it Phys. Rev. Lett.}, {\bf 99} (2007), 100501.

\bibitem{2024c}
Y.-C. Chou,
{\it Bounds on the Zero Blocking Number of Graphs},
Master thesis, Dept. Applied Math., National Yang Ming Chiao Tung Univ., 2024.

\bibitem{2015fjs}
K. Fetcie, B. Jacob and D. Saavedra,
The failed zero forcing number of a graph,
{\it Involve}, {\bf 8} (2015), 99--117.

\bibitem{2021grtn}
L. Gomez, K. Rubi, J. Terrazas, R. Florez and D. A. Narayan,
All graphs with a failed zero forcing number of two,
{\it Symmetry}, {\bf 13} (2021), 2221.

\bibitem{2024grtn}
L. Gomez, K. Rubi, J. Terrazas and D. A. Narayan,
Failed zero forcing numbers of trees and circulant graphs,
{\it Theory Appl. Graphs}, {\bf 11} (2024), Art. 5.

\bibitem{2020ksv}
N. Karst, X. Shen and M. Vu,
Blocking zero forcing processes in Cartesian products of graphs,
{\it Discrete Appl. Math.}, {\bf 285} (2020), 380--396.

\bibitem{2023ktn}
C. Kaudan, R. Taylor and D. A. Narayan,
An inverse approach for finding graphs with a failed zero forcing number of $k$, {\it Mathematics}, {\bf 11} (2023), 4068.

\bibitem{2017s}
Y. Shitov,
On the complexity of failed zero forcing,
{\it Theor. Comput. Sci.}, {\bf 660} (2017), 102--104.

\bibitem{2023su}
N. Swanson and E. Ufferman,
A lower bound on the failed zero-forcing number of a graph,
{\it Involve}, {\bf 16} (2023), 493--504.

\end{thebibliography}
\end{document}